\newtheorem{theorem}{Theorem}[section]
\newtheorem{definition}[theorem]{Definition}
\newtheorem{proposition}[theorem]{Proposition}
\newtheorem{lemma}[theorem]{Lemma}
\newtheorem{example}[theorem]{Example}
\newtheorem{counterexample}[theorem]{Counterexample}
\newtheorem*{counterexample*}{Counterexample}
\newtheorem{conjecture}[theorem]{Conjecture} 
\newtheorem{assumption}[theorem]{Assumption}
\newtheorem{notation}[theorem]{Notation}
\newtheorem*{remark}{Remark}
\newcommand{\R}[0]{\mathbb R}
\newcommand{\N}[0]{\mathbb N}
\newcommand{\calC}[0]{\mathcal C}
\newcommand{\calH}[0]{\mathcal H}
\newcommand{\calP}[0]{\mathcal P}
\newcommand{\calV}[0]{\mathcal V}
\newcommand{\dd}[0]{\mathrm d}
\newcommand{\sumj}[0]{\sum_{j=1}^N}
\newcommand{\sumi}[0]{\sum_{i=1}^N}
\newcommand{\V}[0]{\mathcal V}
\newcommand{\fN}[0]{f^N}
\DeclareMathOperator{\dist}{dist}
\newcommand{\masssum}[2]{ \frac{1}{#2} \sum_{#1 = 1}^{#2}  }
\newcommand{\dx}[1]{\frac{\mathrm{d}}{\mathrm{d}#1}}
\newcommand{\norm}[1]{\left\| #1 \right\|}
\newcommand{\abs}[1]{\left| #1 \right|}
\newcommand{\Ppo}[1]{
    \ifx #1\infty
        \mathcal{P}_{c,0}
    \else 
        \mathcal{P}_{#1,0}
    \fi 
}
\newcommand{\Pp}[1]{
    \ifx #1\infty
        \mathcal{P}_{c}
    \else 
        \mathcal{P}_{#1}
    \fi 
}
\newcommand{\Wpdist}{\mathcal{W}}
\newcommand{\supp}{\mathrm{supp}}
\newcommand{\msolinit}[1]{f_0(#1)}
\newcommand{\msolt}[2]{f_{#1}(#2)}
\newcommand{\charX}[2]{R_{#1}(#2) }
\newcommand{\charV}[2]{V_{#1}(#2)}
\newcommand{\cappsi}{\Psi}
\newcommand{\Pfunc}{P}
\newcounter{partcounter}
\title{Erratum to: Port-Hamiltonian structure \\of interacting particle systems and its mean-field limit}
\author{Jannik Daun\footnote{\href{mailto:jannik.daun@uni-wuppertal.de}{jannik.daun@uni-wuppertal.de}} , Daniel Jannik Happ\footnote{\href{mailto:dhapp@uni-wuppertal.de}{dhapp@uni-wuppertal.de}} , Birgit Jacob\footnote{ \href{mailto:bjacob@uni-wuppertal.de}{bjacob@uni-wuppertal.de}} ,
Claudia Totzeck\footnote{\href{mailto:totzeck@uni-wuppertal.de}{totzeck@uni-wuppertal.de} }}  
\affil{Port-Hamiltonian Institute, School of Mathematics and Natural Sciences, \\ University of Wuppertal, Germany}
\date{February 2026}
\begin{document}
    \maketitle
    \begin{abstract}
This erratum corrects an error in our paper on the port-Hamiltonian structure of interacting particle systems. While convergence of the gradient of the Hamiltonian remains valid under the original assumptions, relative compactness of the system trajectories in the Wasserstein space $\Pp{2}$ does not hold without an additional attractivity assumption on the binary interaction force. We provide a proof for the convergence of the gradient of the Hamiltonian based on {B}arb\u alat's lemma. A counterexample is given for the relative compactness of the system trajectories for repulsive binary interactions. In the case of short-range repulsion and long-range attraction we show several numerical studies that underpin our conjecture of relatively compact trajectories.
\end{abstract}

\begin{minipage}{0.9\linewidth}
 \footnotesize
\textbf{AMS classification:} 37K45, 82C22, 93A16.
\medskip

\noindent
\textbf{Keywords:} Port-Hamiltonian systems, interacting particle systems, mean-field limit, long-time behaviour
\end{minipage}
\phantomsection
    \begin{bibunit}[unsrtnat]
        \section{Introduction}
The main aim of this article is to correct two errors in the paper \cite{JaTo2024}. Moreover, we provide new related analytical results, a counter example, numerical simulation results and a conjecture that summarize our recent insights regarding the challenges in the analysis of interacting particle systems with bounded binary interaction potentials. In this introduction, we proceed directly by describing the main error and the corrected results, while the class of interacting particle systems and the notation is revisited in Section \ref{sec:notation}. The main error occurs in the proof of the asymptotic stability theorem both on the particle level \cite[Theorem $3.8$]{JaTo2024} and the mean-field level \cite[Theorem $4.3$]{JaTo2024}. In their original formulation, these theorems read as follows:
\begin{theorem}\label{thm:stabilor}(particle level, original formulation \cite{JaTo2024})\\
Let $\mathcal{V} \in C^{1}(\R^d; \R)$ with $\nabla \mathcal{V}$ antisymmetric, locally Lipschitz continuous and bounded.
Let $\psi \in C(\R_{\geq 0}; \R_{\geq 0})$ be bounded and locally Lipschitz continuous and assume that $\psi(x) > 0$ for all $x \in \R_{\geq 0}$. Consider the ODE system 
\begin{subnumcases}{ \label{eq:ODEintro} }
        \dx{t} r_i = v_i - \overline{v}, \text{ where } \bar{v} \coloneq \masssum{j}{N} v_j, \\
        \dx{t} v_i = \masssum{j}{N} \psi(\abs{r_j - r_i})(v_j - v_i) - \masssum{j}{N} \nabla \mathcal{V} (r_i - r_j), \\
        r_i(0) =  r_{0,i}, \quad v_i(0) =  v_{0,i}, \quad i = 1, \ldots, N \, .
\end{subnumcases}
Then for every initial condition $ z_0=(r_0,v_0)\in \R^{Nd}\times \R^{Nd}$ such that
\begin{align*}
    \masssum{j}{N} r_{0,j} = 0 \, ,
\end{align*}
the corresponding solution $z(t) = (r(t),v(t))$ of the ODE system \eqref{eq:ODEintro} satisfies
\[ \lim_{t\rightarrow \infty} \dist(z(t), L) =0, \]
where 
\[ L:= \Big\{ (\tilde r,\tilde v)\in \R^{Nd}\times \R^{Nd}\mid \tilde v_{i}=\frac{1}{N}\sumj v_j(0),\,\, \sumj \nabla \mathcal V(\tilde r_ i - \tilde r_j)=0, \, i=1,\ldots, N\Big\}. \]
\end{theorem}

\begin{theorem}\label{thm:stabilityMFor}(mean-field level, original formulation \cite{JaTo2024})\\
Let $\mathcal{V} \in C^{1}(\R^d; \R)$ with $\nabla \mathcal{V}$ antisymmetric, locally Lipschitz continuous and bounded.
Let $\psi \in C(\R_{\geq 0}; \R_{\geq 0})$ be bounded and locally Lipschitz continuous and assume that there exists $\underline{\psi} > 0$ satisfying $\psi(x) \geq \underline{\psi} > 0$ for all $x \in \R_{\geq 0}$. By $\Pp{\infty}(\R^d \times \R^d)$ and $\Pp{2}(\R^d \times \R^d)$, we denote the Wasserstein spaces of order $p = \infty$ and $p=2$, respectively. 
For every initial condition $f_0 \in \Pp{\infty} (\R^d \times \R^d)$ such that 
\begin{align*}
    \int r \, \mathrm{d} \msolinit{r,v} = 0 \, ,
\end{align*}
let $f \in C([0,\infty); \Pp{\infty}(\R^d \times \R^d) )$ be the solution to 
\begin{align}
    &\int_0^\infty \int \big( \partial_t h_t(r,v)  + (v-\bar{v}) \cdot\nabla_r h_t(r,v) \big)\mathrm{d} \msolt{t}{r,v} \mathrm{d} t + \int h_0(r,v)\mathrm{d} \msolinit{r,v} \nonumber \\
	&= \int_0^\infty \int \int\psi(| r -  r^\prime |)(v - v^\prime) - \nabla \V(r^\prime - r) \, \mathrm{d} \msolt{t}{r^\prime,v^\prime} \cdot \nabla_v h_t(r,v) \, \mathrm{d} \msolt{t}{r,v} \mathrm{d} t \, , \nonumber \\
    &\text{ for all } h \in C_c^\infty([0,\infty) \times \R^d \times \R^d), \text{ where } \bar{v} \coloneq \int v \, \mathrm{d}\msolinit{r,v} \, . \label{eq:weakformintro}
\end{align}
Then it holds that
\begin{align*}
\lim\limits_{t\rightarrow \infty}\mathrm{dist}_{\Pp{2}(\R^d \times \R^d) } (f_t, \mathcal L) =0,
\end{align*}
where 
\begin{align*}
\mathcal L = \Big\{ f \in \calP_2(\R^d \times \R^d) \mid f(x,v)=g(x)\delta(v-\bar v),\,  \int \nabla \mathcal{V}(x-\bar x)   \, \mathrm{d} g(x) = 0, \, \bar v = \int v \,  \mathrm{d} f_0(x,v) \Big\}.
\end{align*}
\end{theorem}

The main step in the proof of Theorem \ref{thm:stabilityMFor} is to derive an integral inequality for the $2$-Wasserstein distance $\Wpdist_2$, namely 
\begin{align}
     \Wpdist_{2} (f_t, \delta_{\underline{v}} )^2 \leq \Wpdist_{2} (f_0, \delta_{\underline{v}} )^2 - (2\underline{\psi} - \varepsilon) \int_0^t \Wpdist_2(f_s, \delta_{\underline{v}} )^2 \, \mathrm{d}s + \frac{\norm{\nabla \mathcal{V}}_\infty^2 t }{\varepsilon} \, . \label{eq:diffineqmf}
\end{align}
On the particle level, the corresponding inequality reads
\begin{align}
    \norm{v(t) - \textbf{1} \bar{v} }^2  \leq \norm{v(0) - \textbf{1} \bar{v} }^2 + \frac{t}{\varepsilon} \norm{\nabla \mathcal{V}}_\infty^2 - (2 \lambda_2 - \varepsilon) \int_0^t \norm{v - \textbf{1} \bar{v}}^2 \label{eq:diffineqparticle} \, .
\end{align}
The first error is that \eqref{eq:diffineqparticle} can only be derived by additionally assuming that $\psi(x) \geq \underline{\psi} > 0$ for all $ x \in \R_{\geq 0}$ rather than merely $\psi(x) > 0$, as stated in Theorem \ref{thm:stabilor}. The reason is that the second smallest eigenvalue $\lambda_2$ of $\Psi(z)$ is a function of $z$. Imposing $\psi \geq \underline{\psi}$ yields the uniform lower bound $\lambda_2(z) \geq \underline{\psi}$ for all $z$.
From this point, the proofs of Theorem $3.8$ and $4.3$ proceed similarly and therefore the second mistake is contained in both. We illustrate the issue on the mean-field level. \\ 
The conclusion from \eqref{eq:diffineqmf} that 
\begin{align*}
    \Wpdist_{2} (f_t, \delta_{\underline{v}} )^2 \leq \Big( \Wpdist_{2} (f_0, \delta_{\underline{v}} )^2 + \frac{\norm{\nabla \mathcal{V}}_\infty^2 t }{\varepsilon} \Big) \mathrm{e}^{-(2 \underline{\psi} - \varepsilon) t}
\end{align*}
is justified only if $\varepsilon \geq 2 \underline{\psi}$, but to prove the boundedness of trajectories, we would need it for $\varepsilon < 2\underline{\psi}$. 
As a result, since the argument for the exponential decay of $\Wpdist_{2} (f_t, \delta_{\underline{v}} )^2$ is invalid, we cannot use it to obtain that the family $(f_t)_{t \geq 0}$ is relatively compact in $\Pp{2}(\R^d \times \R^d)$, which is required to apply LaSalle's stability theorem. However, even without recourse to LaSalle's theorem, it is possible to show convergence of the velocities and forces. More precisely, consider the Hamiltonian 
\begin{align*}
    \mathcal{H}(f) = \frac{1}{2} \int \abs{v-\overline{v}}^2 \, \mathrm{d}f(r,v) + \frac{1}{2} \int \int \mathcal{V}(r-r^\prime) \, \mathrm{d}f(r,v) \, \mathrm{d}f(r^\prime, v^\prime), \, f \in \Pp{2}(\R^d \times \R^d) \, ,
\end{align*}
whose gradient is given by 
\begin{align*}
        \nabla \mathcal{H}(f)(r,v) = \begin{pmatrix}
            \int \nabla \mathcal{V}(r-r^\prime) \, \mathrm{d} f(r^\prime, v^\prime) \\ 
            v - \bar{v}
        \end{pmatrix} \, .
\end{align*}
The following theorem establishes the convergence of $\nabla \mathcal{H}$ to zero in $L^2$.
\begin{theorem}(Convergence of $\nabla \mathcal{H}$, new formulation)\label{thm:gradHconvergence} \\ 
     Let $\mathcal{V} \in C^{1}(\R^d; \R)$ be bounded from below with $\nabla \mathcal{V}$ antisymmetric, locally Lipschitz continuous and bounded. Let $\psi \in C(\R_{\geq 0}; \R_{\geq 0})$ be bounded and locally Lipschitz continuous and assume that there exists $\underline{\psi} > 0$ satisfying $\psi(x) \geq \underline{\psi} > 0$ for all $x \in \R_{\geq 0}$. Then for any initial condition $f_0 \in \Pp{\infty}(\R^d \times \R^d)$ satisfying $\int r \, \mathrm{d} \msolinit{r,v} = 0$, the solution $(f_t)_{t \geq 0}$ to \eqref{eq:weakformintro} satisfies
    \begin{align*}
        \int \abs{v-\bar{v}}^2 \, \mathrm{d} \msolt{t}{r,v} \stackrel{t \to \infty}{\to }0 \, .
    \end{align*}
    If additionally $\nabla \mathcal{V}$ is uniformly continuous, then we have 
    \begin{align}
        \int  \Big| \int \nabla \mathcal{V}(r-r^\prime) \, \mathrm{d} \msolt{t}{r^\prime,v^\prime} \Big|^2 \, \mathrm{d} \msolt{t}{r,v} \stackrel{t \to \infty}{\to } 0 \, . \label{eq:Forceconvergence}
    \end{align}
\end{theorem}
Theorem \ref{thm:gradHconvergence} implies a corresponding result on the particle level by taking an initial condition of the form $f_0 = \masssum{j}{N} \delta_{(r_j(0), v_j(0))}$. In this case, the convergence on the  particle level reads 
\begin{align*}
    \masssum{j}{N} \abs{v_j(t) - \overline{v}}^2 \stackrel{t \to \infty}{\to} 0, \text{ and } \frac{1}{N^3}  \sum_{j=1}^N \Big|\sum_{k=1}^N \nabla \mathcal{V}(r_j(t) - r_k(t) ) \Big|^2 \stackrel{t \to \infty}{\to} 0 \, .
\end{align*}
Since mean-field level results directly imply the corresponding particle level statements by choosing an atomic initial condition, we can restrict our discussion to the mean-field level. \\
We emphasize that Theorem \ref{thm:gradHconvergence} does not imply that $\dist(f_t, \mathcal{L}) \stackrel{t \to \infty}{\to} 0$ in $\mathcal{P}_2(\R^d \times \R^d )$, where $\mathcal{L}$ denotes the set of critical points of the Hamiltonian $\mathcal{H}$. The reason is that the family $(f_t)_{t \geq 0}$ might not be precompact in $\Pp{2}(\R^d \times \R^d)$. In fact, the convergence of the binary forces 
\begin{align*}
    \int  \Big| \int \nabla \mathcal{V}(r-r^\prime) \, \mathrm{d} \msolt{t}{r^\prime,v^\prime} \Big|^2 \, \mathrm{d} \msolt{t}{r,v} \stackrel{t \to \infty}{\to } 0
\end{align*}
can also be caused by an escape of mass to infinity, while the measure family $(f_t)_{t \geq 0}$ does not have any limit point in $\mathcal{P}_2(\R^d \times \R^d)$.  The following counterexample demonstrates that this absence of limit points can occur without additional assumptions on the binary interaction potential $\mathcal{V}$.
\begin{counterexample}(new formulation)\label{Cornoncompactness} \\
        Let $\mathcal{V} \in C^{1}(\R^d; \R)$ be bounded from below with $\nabla \mathcal{V}$ antisymmetric, uniformly continuous, locally Lipschitz continuous and bounded. Moreover, assume that $\nabla \mathcal{V}(x) \cdot x < 0$ for all $x \in \R^d\setminus\{ 0 \}$. Let $\psi \in C(\R_{\geq 0}; \R_{\geq 0})$ be bounded and locally Lipschitz continuous and assume that there exists $\underline{\psi} > 0$ satisfying $\psi(x) \geq \underline{\psi} > 0$ for all $x \in \R_{\geq 0}$.  Then there exists initial data $f_0 \in \Pp{\infty} (\R^d \times \R^d)$ such that the corresponding solution $(f_t)_{t \geq 0}$ to \eqref{eq:weakformintro} does not have a limit point in $\mathcal{P}_2(\R^d \times \R^d)$ as $t \to \infty$. That is, for any sequence $(t_n)_{n \in \N} \subseteq \R_{\geq 0} $ such that $t_n \stackrel{n \to \infty}{\to} \infty$, the sequence $(f_{t_n})_{n \in \N}$ does not converge in $\Pp{2}(\R^d \times \R^d)$.
\end{counterexample}
The condition $\nabla \mathcal{V}(x) \cdot x < 0$ in the previous counterexample indicates that the binary interaction force between two particles is repulsive for all relative positions $x$. Despite the fact that the alignment function $\psi$ is bounded from below by a positive constant, the repulsion between particles causes an escape of mass to infinity. As a consequence, relative compactness can only be obtained by an additional assumption on $\mathcal{V}$ which ensures that the interaction is sufficiently attractive. The simplest possibility is to add coercivity in the sense that $\mathcal{V}(x) \geq c \abs{x}^2 - C$ for some constants $c,C > 0$ independent of $x$. In this case, the energy balance $\dx{t} \mathcal{H}(f_t) \leq 0$ implies that the second moments of $(f_t)_{t \geq 0}$ are uniformly bounded and therefore the family is relatively compact in $\Pp{p}$ for every $p \in [1,2)$ (see Proposition \ref{prop:compactness} below). However, this approach is incompatible with our standing assumption that $\nabla \mathcal{V}$ is bounded, which allows for at most linear growth at infinity. Therefore, we adopt another approach and formulate a different attractivity condition as in the following theorem:

\begin{theorem}(new formulation)\label{thm:asymptotics} \\
        Let $\mathcal{V} \in C^{1}(\R^d; \R)$ be bounded from below by a constant $\underline{\mathcal{V}} \in \R$ and let $\nabla \mathcal{V}$ be antisymmetric, uniformly continuous, locally Lipschitz continuous and bounded. Let $\psi \in C(\R_{\geq 0}; \R_{\geq 0})$ be bounded and locally Lipschitz continuous and assume that there exists $\underline{\psi} > 0$ satisfying $\psi(x) \geq \underline{\psi} > 0$ for all $x \in \R_{\geq 0}$. Finally, let $f_0 \in \Pp{\infty}(\R^d \times \R^d)$ be an initial condition satisfying $\int r \, \mathrm{d} f_0(r,v) = 0$ and assume that there exists $r_0 \geq 0$ (possibly depending on $f_0$) such that the inequality 
        \begin{align}
            \int \int \nabla \mathcal{V}(R(r,v)-R(r^\prime,v^\prime)) \cdot (R(r,v)-R(r^\prime,v^\prime)) \, \mathrm{d} \msolinit{r,v} \, \mathrm{d}\msolinit{r^\prime, v^\prime} \geq 0 \label{eq:attrcase} 
        \end{align}
        holds for all $R \in L^2(\R^d \times \R^d, f_0; \R^d)$ satisfying 
        \begin{align*}
            \int R(r,v) \, \mathrm{d}\msolinit{r,v} = 0 \text{ and } \norm{R}_{L^2} \geq r_0 \, .
        \end{align*}
        Then the corresponding solution $(f_t)_{t \geq 0}$ to \eqref{eq:weakformintro} satisfies 
        \begin{align}
            \int \abs{r}^2 \, \mathrm{d}\msolt{t}{r,v}  &\leq \frac{1}{\underline{\psi}} \Big( 1 + 2 \sqrt{2} \big( \mathcal{H}(f_{0}) - \underline{\mathcal{V}} \big)^{\frac{1}{2}} \Big)  \max \Big\{ r_0, \Big( \int  \abs{r}^2 \, \mathrm{d}\msolinit{r,v} \Big)^{\frac{1}{2} } \Big\}  \nonumber \\
            &\quad + \frac{\norm{\psi}_\infty}{\underline{\psi}} \max \Big\{ r_0^{2}, \int \abs{r}^2 \, \mathrm{d}\msolinit{r,v} \Big\}  + \frac{2}{\underline{\psi}^{2}} (\mathcal{H}(f_0) - \underline{\mathcal{V}})   \, . \label{eq:momboundattr}
        \end{align}
        Moreover, for every $p \in [1,2)$, it holds that
        \begin{align}
             \lim_{t\rightarrow \infty} &\dist_{\Pp{p} (\R^d \times \R^d )}(f_t, \mathcal{L}) =0, \label{eq:Wpdistconv}
        \end{align}
        where
        \begin{align*}
            \mathcal L &= \Big\{ f \in \Pp{2}(\R^d \times \R^d) \mid f=g(r) \delta(v-\bar{v} ), \int \nabla \mathcal{V}(r -r^\prime) \mathrm{d} g(r^\prime) = 0,  \bar{v} = \int  v \,  \mathrm{d} \msolinit{r,v} \Big\}. 
        \end{align*}
    \end{theorem}
    For a radially symmetric potential $\mathcal{V}(x) = U (\abs{x})$ with $U \in C^1(\R_{\geq 0}, \R), \, U^\prime(0) = 0$, the condition \eqref{eq:attrcase} is clearly satisfied if $U^\prime(x) \geq 0 , \, x \in \R_{\geq 0}$, meaning that the forces are strictly attractive. However, we emphasize that the long-range attractivity condition $U^\prime(x) \geq 0$ for all $x \geq r_0$ and some $r_0 > 0$ does in general not imply \eqref{eq:attrcase} (see Example \ref{ex:morsecounterex} below). However, we conjecture that boundedness of second moments is still satisfied in this case. 
\begin{conjecture}\label{con:boundedness}
   Let $\psi \in C(\R_{\geq 0}; \R_{\geq 0})$ be bounded, locally Lipschitz continuous and assume that there exists $\underline{\psi} > 0$ satisfying $\psi(x) \geq \underline{\psi} > 0$ for all $x \in \R_{\geq 0}$. Let $\mathcal{V}(x)  = U(\abs{x}), x \in \R^d $ for a function $U \in C^1(\R_{\geq 0}, \R)$ with $U^\prime (0) = 0$, $U^\prime$ bounded and such that $\nabla \mathcal{V}$ is locally Lipschitz continuous. Moreover, let there exist $r_0 \geq 0$ such that $U^\prime(x) \geq 0$ for all $x \geq r_0$. Then for every initial condition $f_0 \in \Pp{\infty}(\R^d \times \R^d)$ with $\int r \, \mathrm{d} \msolinit{r,v} = 0$, the corresponding solution $(f_t)_{t \geq 0}$ to \eqref{eq:weakformintro} satisfies 
    \begin{align*}
        \sup_{t \geq 0} \int \abs{r}^2 \, \mathrm{d}\msolt{t}{r,v} < \infty \, .
    \end{align*}
\end{conjecture}
The remainder of the article is structured as follows: The relevant background on interacting particle systems and the notation are revisited in Section \ref{sec:notation}, while Section \ref{sec:newresults} is devoted to prove the new results, namely Theorem \ref{thm:gradHconvergence}, Counterexample \ref{Cornoncompactness} and Theorem \ref{thm:asymptotics}. Numerical simulation results to assess Conjecture \ref{con:boundedness} are shown in Section \ref{sec:numerics}.

\section{Description of the model and notation} \label{sec:notation}
    We will use the same notation as in the previous paper \cite{JaTo2024}. For the sake of completeness, we recall the formulation of the interacting particle system. We consider $N \in\N, \,  N \geq 2$ interacting particles moving in $\R^d$. The state of each particle is given by a position $x_i$ and a velocity coordinate $v_i$ for $i = 1, \ldots, N$. Those states evolve according to the Newtonian dynamics
    \begin{subnumcases}{ \label{eq:xvODE} }
        \dx{t} x_i = v_i, \\
        \dx{t} v_i = \masssum{j}{N} \psi(\abs{x_j - x_i})(v_j - v_i) - \masssum{j}{N} \nabla \mathcal{V} (x_i - x_j), \\
        x_i(0) =  \hat{x}_i, \quad v_i(0) =  \hat{v}_i, \quad i = 1, \ldots, N \, .
    \end{subnumcases}
    Here, $\psi \colon \R \rightarrow \R_{\ge 0}$ models the velocity alignment strength and $\mathcal{V} \colon \R^d  \rightarrow \R$ denotes the binary interaction potential among the particles.  
    We assume that $\mathcal{V} \in C^{1}(\R^d ; \R) $ and  
    \begin{align}
        \nabla \V(x) = - \nabla \V(-x) \text{ for all } x\in \R^d. \label{eq:antisymmetry}
    \end{align}
    The antisymmetry condition \eqref{eq:antisymmetry} ensures that the binary interaction forces between two particles have the same magnitude but opposite directions, in accordance with Newton's third law. As a consequence, the mean velocity 
    \begin{align*}
        \bar{v} \coloneq \masssum{j}{N} v_j(t)
    \end{align*}
    is a conserved quantity and the center of mass 
    \begin{align*}
        \bar{x}(t) \coloneq \masssum{j}{N} x_j(t)
    \end{align*}
     satisfies $\bar{x}(t) = \bar{x}(0) + t \bar{v} $. Therefore, we introduce the position coordinates relative to the center of mass by $r_i \coloneq x_i - \bar{x}, \, i = 1, \ldots, N$. \\
    The \textit{mean-field limit} is obtained by taking the limit $N \to \infty$ of the empirical measure
    \begin{align}
        f_t(r,v) \coloneq \masssum{i}{N}  \delta (r - r_i(t)) \otimes \delta (v - v_i(t)) \, . \label{eq:empmeasure}
    \end{align}
    To rigorously describe limits of probability measures, we use the \textit{Wasserstein} distance. For $p \in [1,\infty)$, let $\Pp{p}(\R^d \times \R^d)$ be the set of all Borel probability measures with finite $p$-th moment. The $p = \infty$ Wasserstein space is denoted by $\Pp{\infty}$ and contains all compactly supported probability measures. For $p \in [1,\infty]$ and $\mu,\nu\in \Pp{p}(\R^d \times \R^d )$ (respectively $\Pp{\infty} (\R^d \times \R^d$)), the corresponding Wasserstein distance is defined as
    \begin{align*}
        \Wpdist_p (\mu, \nu) := 
        \begin{cases}
            \inf\limits_{\pi \in \Pi(\mu, \nu)}  \Big( \displaystyle \int \abs{(x - x^\prime, y - y^\prime) }^p \, \mathrm{d} \pi(x,y,x^\prime, y^\prime) \Big)^{\frac{1}{p} }   , &1 \leq p < \infty, \\
            \inf\limits_{\pi \in \Pi(\mu, \nu)} \sup \,  \{ \abs{(x-x^\prime, y-y^\prime)} \mid (x,y,x^\prime,y^\prime) \in \supp(\pi) \} \, , &p = \infty \, ,
        \end{cases}
    \end{align*}
    where $\Pi(\mu,\nu)$ denotes the set of all Borel probability measures on $ \R^{4d}$ that have $\mu$ and $\nu$ as first and second marginals respectively, i.e.\    
    \begin{align*}
        \pi(B \times \R^d \times \R^d ) = \mu(B), \qquad \pi(\R^d \times \R^d \times B) = \nu(B) \quad \text{for } B \in \mathcal{B}(\R^{d} \times \R^d ) \, .
     \end{align*}
    By $\Ppo{p}$ (resp.\ $\Ppo{\infty}$), we denote the set of all $\mu \in \Pp{p}$ (resp.\ $\Pp{\infty}$) which satisfy 
    \begin{align*}
        \int r \, \mathrm{d} \mu(r,v) = 0 \, .
    \end{align*}
    Since the empirical measure \eqref{eq:empmeasure} is defined in terms of the center of mass coordinates $r_i$, it follows that $(f_t)_{t \geq 0} \subseteq \Ppo{\infty}$. \\
    To obtain an equation for the mean-field limit, we note that the empirical measure \eqref{eq:empmeasure} satisfies
    \begin{align}
        &\int_0^T \int \big( \partial_t h_t(r,v)  + (v-\bar{v}) \cdot\nabla_r h_t(r,v) \big)\mathrm{d} \msolt{t}{r,v} \mathrm{d} t + \int h_0(r,v)\mathrm{d} \msolinit{r,v} \nonumber \\
	&= \int_0^T \int \int\psi(| r -  r^\prime |)(v - v^\prime) - \nabla \V(r^\prime - r) \, \mathrm{d} \msolt{t}{r^\prime,v^\prime} \cdot \nabla_v h_t(r,v) \, \mathrm{d} \msolt{t}{r,v} \mathrm{d} t \label{eq:weakform}
    \end{align}
    for every test function $h \in C_c^\infty (\R_{\geq 0} \times \R^d \times \R^d )$, where $\bar{v} = \int v^\prime \, \mathrm{d}\msolinit{r^\prime, v^\prime}$. This weak formulation \eqref{eq:weakform} can be used to define a measure-valued solution corresponding to an arbitrary initial condition $f_0 \in \Ppo{\infty}(\R^d \times \R^d)$. Namely, we require that $f \in C( [0,T) ; \Ppo{\infty} (\R^d \times \R^d) )$ satisfies \eqref{eq:weakform} for every test function $h$. \\
    The characteristics $(R,V) \in C^1([0,T) ; C(\supp(f_0) ; \R^d \times \R^d ) )$ corresponding to an initial condition $f_0 \in \Ppo{\infty}(\R^d \times \R^d)$ are defined as solutions to the Banach space-valued ODE system
    \begin{subnumcases}{\label{eq:ODEchar} \qquad }
        \dx{t} \charX{t}{r,v} = \charV{t}{r,v} - \bar{v} \, , \label{EqODEX} \\ 
        \dx{t} \charV{t}{r,v} = \int \psi (\abs{ \charX{t}{r,v} - \charX{t}{r^\prime, v^\prime } } ) (\charV{t}{r^\prime, v^\prime} - \charV{t}{r,v} ) \, \mathrm{d}\msolinit{r^\prime, v^\prime} \nonumber \\
        \qquad \qquad \qquad \quad -  \int  \nabla \mathcal{V} ( \charX{t}{r,v} - \charX{t}{r^\prime, v^\prime} ) \, \mathrm{d} \msolinit{r^\prime, v^\prime} \, , \label{EqODEV} \\ 
        \charX{0}{r,v} = r, \, \charV{0}{r,v} = v , \, (r,v) \in \supp (f_0) \, .
    \end{subnumcases} 
    Under appropriate assumptions (the standing Assumptions \ref{ass:asymptotics} we use in Section \ref{sec:newresults} are sufficient), the system \eqref{eq:ODEchar} admits a unique global solution. In addition, the weak formulation \eqref{eq:weakform} also admits a unique measure-valued solution $f$, which is given by
    \begin{align}
        f_t = (\charX{t}{\cdot,\cdot}, \charV{t}{\cdot, \cdot} )_{\#} f_0 \label{eq:pushforward} \, , \, t \geq 0 \, .
    \end{align}
    We conclude this section by recalling that the \textit{Hamiltonian} $\mathcal{H}$ is defined as 
    \begin{align*}
        \mathcal{H}(f) \coloneq \frac{1}{2} \int \abs{v-\bar{v}} \, \mathrm{d} f(r,v) + \frac{1}{2} \int \int \mathcal{V}(r-r^\prime) \, \mathrm{d} f(r,v) \, \mathrm{d}f(r^\prime, v^\prime) \, .
    \end{align*}
    The gradient of the Hamiltonian is given by (see \cite{JaTo2024})
    \begin{align*}
        \nabla \mathcal{H}(f)(r,v) = \begin{pmatrix}
            \int \nabla \mathcal{V}(r-r^\prime) \, \mathrm{d} f(r^\prime, v^\prime) \\ 
            v - \bar{v}
        \end{pmatrix} \, .
    \end{align*}
    The following proposition establishes the energy balance and conservation of the mean velocity on the mean-field level: 
    \begin{proposition}(energy balance, see \cite[Theorem $4.1$]{JaTo2024})\label{prop:Hbalance} \\
        For every initial condition $f_0 \in \Ppo{\infty}(\R^d \times \R^d)$ and all $t \geq 0$, we have 
        \begin{align}
            \dx{t} \mathcal{H}(f_t) = -\frac{1}{2} \int \int \psi (\abs{r-r^\prime}) \abs{v-v^\prime}^2 \, \mathrm{d}\msolt{t}{r,v} \, \mathrm{d} \msolt{t}{r^\prime, v^\prime}  \label{eq:enbalance} \, .
        \end{align}
        Moreover, the mean velocity is a conserved quantity, that is 
        \begin{align*}
            \int v \, \mathrm{d} \msolt{t}{r,v} = \int v \, \mathrm{d} \msolinit{r,v} \eqcolon \overline{v} \text{ for all } t \geq 0 \, .
        \end{align*}
    \end{proposition}

\newpage

\section{New results}\label{sec:newresults}
Throughout this chapter, we work under the following standing assumptions.
\begin{assumption}\label{ass:asymptotics} 
   \begin{enumerate}
       \item The potential $\mathcal{V} \in C^1(\R^d ; \R)$ is bounded from below by a constant $\underline{\mathcal{V}} \in \R$. Moreover, we assume that $\nabla \mathcal{V}$ is antisymmetric, locally Lipschitz continuous and bounded.
       \item The alignment function $\psi \in C(\R_{\geq 0};\R_{\geq 0})$ is bounded, locally Lipschitz continuous and there exists $\underline{\psi} > 0$ such that $\psi(x) \geq \underline{\psi}$ for all $x \in \R_{\geq 0}$.
   \end{enumerate}
\end{assumption}
The following proposition is a direct consequence of Danskin's theorem, see \cite[Theorem 1]{Danskin}.
\begin{proposition}\label{prop:Danskin}
    Let $n , m \in \N$, $K \subseteq \R^m $ compact and let $\varPhi \colon \R^n\to \R$ be a $C^{1}$ function. If $I \subseteq \R$ is a nonempty open interval and $F \in C^{1}(I; C(K; \R^n) )$, then the map $S \colon I \to \R, \, S(t) = \sup_{x \in K} \varPhi(F_t(x))$ is right differentiable at every $t \in I$ and its right-derivative $\dx{t}_+ S(t)$ is given by 
    \begin{align*}
        \dx{t}_+ S(t) \coloneq \lim\limits_{h \downarrow 0} \frac{S(t + h) - S(t)}{h} =  \sup\limits_{\substack{x \in K \\ \varPhi(F_t(x)) = S(t) }} \nabla \varPhi(F_t(x)) \cdot \dx{t} F_t(x) \, . 
    \end{align*}
\end{proposition}

\begin{lemma}\label{LVbound}
	For every initial condition $f_0 \in \Ppo{\infty}(\R^d \times \R^d)$, let $(f_t)_{t \geq 0}$ and $(V_t)_{t \geq 0}$ denote the solutions to \eqref{eq:weakform} and \eqref{eq:ODEchar}, respectively. Then for all $t \geq 0$, it holds that
    \begin{align}
        \int \abs{v-\bar{v}}^2 \, \mathrm{d}\msolt{t}{r,v} &\leq 2 (\mathcal{H}(f_0) - \underline{\mathcal{V}}) \, , \label{eq:vL2bound} \\ 
		\norm{\charV{t}{\cdot, \cdot}  - \bar{v} }_\infty ^{2} &\leq  \Big(  \norm{\charV{0}{\cdot, \cdot } - \bar{v} }_\infty   \mathrm{e}^{-\underline{\psi} t} + \frac{\big\| \nabla \mathcal{V} \big\|_{\infty}}{\underline{\psi}} \big( 1 - \mathrm{e}^{-\underline{\psi} t} \big) \Big)^{2}  \label{eq:vLinftybound} \, .
	\end{align}
\end{lemma}
\begin{proof}
    The inequality \eqref{eq:vL2bound} is an immediate consequence of the fact that the function $t \mapsto \mathcal{H}(f_t)$ is decreasing (see Proposition \ref{prop:Hbalance}) and that $\mathcal{V}$ is bounded from below. \\
	In order to prove the inequality \eqref{eq:vLinftybound}, we define the function $S(t) := \norm{ \charV{t}{\cdot, \cdot} - \bar{v}}_\infty^{2}$ for  $t \geq 0$. By applying Proposition \ref{prop:Danskin} to the function $S$ with $K = \supp (f_0)$, we obtain that 
			\begin{align*}
				\dx{t}_+& S(t) = \lim\limits_{h \downarrow 0} \frac{S(t+ h) - S(t) }{h} =   
				 \sup\limits_{\substack{(r,v) \in \supp (f_0) \\
						\abs{\charV{t}{r,v}  - \bar{v} }^{2} = S(t)}  } 2 (\charV{t}{r,v} - \bar{v}) \cdot \dx{t} \charV{t}{r,v}  \\
				 &=  \sup\limits_{\substack{(r,v) \in \supp (f_0) \\
						\abs{ \charV{t}{r,v}  - \bar{v} }^{2} = S(t)}  } 
				2\Big[ \displaystyle \int  \psi \big( \abs{\charX{t}{r,v} - \charX{t}{r^\prime, v^\prime} } \big) (\charV{t}{r,v}  - \bar{v}) \cdot (\charV{t}{r^\prime, v^\prime} -  \charV{t}{r,v} ) \, \mathrm{d}\msolinit{r^\prime, v^\prime}    \\
				&\qquad \qquad\qquad  - \int \nabla \mathcal{V}( \charX{t}{r,v}  - \charX{t}{x^\prime, v^\prime } ) ) \cdot ( \charV{t}{r,v}  - \bar{v}) \, \mathrm{d}\msolinit{r^\prime, v^\prime} \Big] \\
				&\leq  \sup\limits_{\substack{(r,v) \in \supp (f_0) \\
						\abs{ \charV{t}{r,v}  - \bar{v} }^{2} = S(t)}  } - 2\underline{\psi} \abs{ \charV{t}{r,v}  - \bar{v}}^{2} - 2\int \vspace{-0.2em} \nabla \mathcal{V}( \charX{t}{r,v} - \charX{t}{r^\prime, v^\prime} ) \cdot ( \charV{t}{r,v} - \bar{v}) \, \mathrm{d}\msolinit{r^\prime, v^\prime}  \\
				&\leq - 2\underline{\psi} S(t) + 2\big\| \nabla \mathcal{V} \big\|_{\infty} \sqrt{S(t)} \, .
			\end{align*}
	In the third line above, we have used that 
    \begin{align*}
        (\charV{t}{r,v}  - \bar{v}) \cdot (\charV{t}{r^\prime, v^\prime} -  \charV{t}{r,v} ) 
        &= (\charV{t}{r,v}  - \bar{v}) \cdot (\charV{t}{r^\prime,v^\prime}  - \bar{v}) - \abs{\charV{t}{r,v}  - \bar{v}}^2 \\
        &\leq \abs{\charV{t}{r,v}  - \bar{v}} \big( \abs{ \charV{t}{r^\prime, v^\prime} - \bar{v} } - \abs{\charV{t}{r, v} - \bar{v}} \big)
        \leq 0 ,
    \end{align*}
    whenever $S(t) = \abs{\charV{t}{r, v} - \bar{v}}^2$. 
    We note that the maximal solution, that is, a solution on a maximal existence interval that pointwise dominates any other solution with the same initial condition (see \cite[Chapter 3.2]{Hartmann}), of the scalar ODE 
	\begin{align*}
		\dx{t} x(t) = - 2 \underline{\psi} x(t) + 2\big\| \nabla \mathcal{V} \big\|_{\infty} \sqrt{\abs{x(t)}}
	\end{align*}
	with initial condition $x(0) \geq 0$ is given by 
	\begin{align*}
		x(t) &= \Big( \sqrt{x(0)} \mathrm{e}^{-\underline{\psi} t} + \big\| \nabla \mathcal{V} \big\|_{\infty} \int_0^t \mathrm{e}^{-\underline{\psi} (t-s))} \, \mathrm{d}s \Big)^{2} 
		= \Big( \sqrt{x(0)} \mathrm{e}^{-\underline{\psi} t} + \frac{\big\| \nabla \mathcal{V} \big\|_{\infty}}{\underline{\psi}} (1- \mathrm{e}^{-\underline{\psi} t)} ) \Big)^{2} .
	\end{align*}
	Using the comparison principle for scalar ODEs (see e.\ g.\ \cite[Theorem $4.1$, Chapter $3$]{Hartmann}), we obtain that
	\begin{align*}
		S(t) &\leq
		\Big( \sqrt{S(0)} \mathrm{e}^{- \underline{\psi} t} + \frac{\big\| \nabla \mathcal{V} \big\|_{\infty}}{\underline{\psi}} \big( 1 - \mathrm{e}^{-\underline{\psi} t)} \big) \Big)^{2} 
		 , \,  t \geq 0 \, . \qquad \qedhere
	\end{align*}
\end{proof}
We note that the supremum norm in \eqref{eq:vLinftybound} can also be replaced by an $L^2(f_0;\R^d)$-norm using a similar argument. \\
Our main tool to prove the convergence of $\nabla \mathcal{H}$ as stated in Theorem \ref{thm:gradHconvergence} is {B}arb\u alat's lemma.  We refer to \cite[Theorem $4$]{FarkasWegner} for a proof of the lemma.

\begin{lemma}({B}arb\u alat's lemma) \label{Lbarbalat}\\  
    Let $E$ be a Banach space and $g \in C^{1}(\R_{\geq 0}; E)$ such that the limit $\lim\limits_{t \to \infty} g(t) = g_\infty$ exists in $E$. If $g^\prime \colon [0,\infty) \to E$ is uniformly continuous, then $ g^\prime (t) \to  0 $ in $E$ as $t \to \infty$.
    \end{lemma}
    
\begin{proof}[Proof of Theorem \ref{thm:gradHconvergence}]
    We start the proof by showing $L^2$-convergence of $V_t$. Due to the energy balance \eqref{eq:enbalance}, we have 
        \begin{align*}
            \dx{t} \calH (f_t) &= - \frac{1}{2} \int \int \psi \big( \abs{\charX{t}{r,v} - \charX{t}{r^\prime, v^\prime} } \big) \abs{ \charV{t}{r,v} - \charV{t}{r^\prime, v^\prime} }^{2} \mathrm{d} \msolinit{r,v} \mathrm{d} \msolinit{r^\prime, v^\prime} \\
            &\leq - \frac{\underline{\psi}}{2} \int \int \abs{ \charV{t}{r,v} - \charV{t}{r^\prime, v^\prime}  }^{2} \, \mathrm{d} \msolinit{r,v} \, \mathrm{d}\msolinit{r^\prime, v^\prime} \\
            &= - \underline{\psi} \int \abs{ \charV{t}{r,v} - \bar{v}}^{2} \, \mathrm{d}\msolinit{r,v} \eqcolon - \underline{\psi} g(t)\, . 
        \end{align*}
        Since the potential $\mathcal{V}$ is bounded from below, also the Hamiltonian $\calH$ is bounded from below. Hence, we conclude that $g \in L^{1}([0,\infty))$. Moreover, Lemma \ref{LVbound} implies that $\sup\limits_{t \geq 0} \norm{V_t -\bar{v}}_{L^2} < \infty$. The derivative of $g$ is given by 
        \begin{align*}
            g^\prime(t) = - \int \int \psi \big(\abs{ \charX{t}{r,v} - \charX{t}{r^\prime, v^\prime} }\big) \abs{ \charV{t}{r,v} - \charV{t}{r^\prime, v^\prime} }^2 \, \mathrm{d} \msolinit{r,v} \mathrm{d} \msolinit{r^\prime, v^\prime} \\
            - 2 \int \charV{t}{r,v} \cdot \int \nabla \mathcal{V} (\charX{t}{r,v} - \charX{t}{r^\prime, v^\prime}) \, \mathrm{d} \msolinit{r,v} \, \mathrm{d} \msolinit{r^\prime, v^\prime}  
        \end{align*}
        and the right hand side is bounded uniformly with respect to $t$ since $\nabla \mathcal{V}$ is bounded. Hence, $g$ is Lipschitz continuous and therefore uniformly continuous. Since $g \in L^1([0,\infty))$, {B}arb\u alat's Lemma \ref{Lbarbalat} implies that $g(t) \stackrel{t \to \infty}{\to} 0$. Since
        $f_t = (\charX{t}{\cdot,\cdot} , \charV{t}{\cdot, \cdot} )_{\#} f_0 $, we obtain that
        \begin{align*}
            \Wpdist_{2}^{2} ( (p_v)_{\#} f_t, \delta_{\bar{v}} ) = \int \abs{v - \bar{v}}^{2} \, \mathrm{d} \msolt{t}{r,v} = \int \abs{\charV{t}{r,v} - \bar{v} }^{2} \, \mathrm{d}\msolinit{r,v} \stackrel{t \to \infty}{\to } 0 \, . 
        \end{align*}
        To complete the proof, let us now additionally assume that $\nabla \mathcal{V}$ is uniformly continuous and we have to show convergence of the forces as stated in \eqref{eq:Forceconvergence}. Note that the embedding \\$C(\supp(f_0); \R^d) \hookrightarrow L^{2} (f_0;\R^d )$ is continuous and therefore \\
        $(\charV{t}{\cdot,\cdot} - \bar{v}) \in C^{1}(\R_{\geq 0}; C(\supp(f_0); \R^d) ) \subseteq C^{1}(\R_{\geq 0}; L^{2}(f_0;\R^d))$. We have already shown that $ \charV{t}{\cdot,\cdot} - \bar{v} \to 0$ in $L^{2}(f_0; \R^d)$ as $t \to \infty$. In order to apply {B}arb\u alat's lemma \ref{Lbarbalat}, we have to prove that $\dx{t} ( \charV{t}{\cdot,\cdot} - \bar{v}) \in C(\R_{\geq 0}; L^2(f_0;\R^d))$ is uniformly continuous, where
         \begin{align}
             \dx{t} ( \charV{t}{r,v} - \bar{v}) 
             &= \int \psi (\abs{ \charX{t}{r,v}-  \charX{t}{r^\prime, v^\prime} } ) ( \charV{t}{r^\prime, v^\prime} - \charV{t}{r,v} ) \, \mathrm{d}\msolinit{r^\prime, v^\prime} \nonumber \\
             &\quad - \int \nabla \mathcal{V} ( \charX{t}{r,v} - \charX{t}{r^\prime, v^\prime } ) \, \mathrm{d}\msolinit{r^\prime, v^\prime} \eqcolon A_{1,t}(r,v) + A_{2,t}(r,v) \, .
         \end{align}
         We verify separately for both terms that $A_1$ and $A_2$ are uniformly continuous. We claim that $A_1$ tends to zero in $L^{2}(f_0;\R^d)$ as $t \to \infty$. Then it follows that the continuous function $A_1 \in C(\R_{\geq 0}; L^2(f_0;\R^d))$ is even uniformly continuous. We have 
         \begin{align*}
              \norm{A_{1,t}}_{L^2}^2 = \int \Big|& \int \psi \big(\abs{R_t(r,v) - R_t(r^\prime, v^\prime)} \big) (V_t(r^\prime, v^\prime) - V_t(r,v)) \, \mathrm{d}\msolinit{r^\prime, v^\prime} \Big|^{2} \, \mathrm{d}\msolinit{ r,v} \\
              &\leq \norm{\psi}_\infty^{2} \int \int \big| \charV{t}{r^\prime, v^\prime} - \charV{t}{r,v} \big|^{2} \, \mathrm{d}\msolinit{r^\prime, v^\prime} \, \mathrm{d}\msolinit{r,v} \\
              &= 2 \norm{\psi}_\infty^{2} \int \big| \charV{t}{r,v} - \bar{v} \big|^{2} \, \mathrm{d} \msolinit{r, v} \stackrel{t \to \infty}{\to} 0 \, .
         \end{align*}
         To show uniform continuity of $A_2$, we fix $\varepsilon > 0$. Since $\nabla \mathcal{V}$ is uniformly continuous, there exists $\delta > 0$ such that $\abs{\nabla \mathcal{V}(x) - \nabla \mathcal{V}(y) } \leq \varepsilon $ whenever $x,y \in \R^d$ with $\abs{x-y} \leq \delta$. According to Lemma \ref{LVbound}, we have 
         \begin{align*} 
             L := \sup_{t \geq 0} \norm{\charV{t}{\cdot, \cdot} - \bar{v}}_\infty < \infty .
         \end{align*}
         In particular, for all pairs $(r,v), (r^\prime, v^\prime) \in \supp(f_0)$, and  $s,t \in \R_{\geq 0}$, we have 
         \begin{align*} 
             &\abs{ \charX{t}{r,v} - \charX{t}{r^\prime, v^\prime} - (\charX{s}{r,v} - \charX{s}{r^\prime, v^\prime} ) } \\
             &\quad \leq \abs{ \charX{t}{r,v} - \charX{s}{r,v} } +
             \abs{ \charX{t}{r^\prime,v^\prime} - \charX{s}{r^\prime,v^\prime} }
             \leq 2 L \abs{t-s} \, .
         \end{align*}
         Thus, for all $s,t \in \R_{\geq 0}$ with $\abs{t-s} \leq \delta /{2L}$, we obtain 
         \begin{align*}
             \norm{A_{2,t} - A_{2,s}}_{L^2}^2 
             &= \int \Big| \int \nabla \mathcal{V}( \charX{t}{r,v} - \charX{t}{r^\prime, v^\prime} ) - \nabla \mathcal{V}( \charX{s}{r,v} - \charX{s}{r^\prime, v^\prime} ) \,  \mathrm{d} \msolinit{r^\prime, v^\prime} \Big|^{2} \, \mathrm{d}\msolinit{r,v} \\
             & \leq \int  \int \big|\nabla \mathcal{V}( \charX{t}{r,v} - \charX{t}{r^\prime, v^\prime} ) - \nabla \mathcal{V}( \charX{s}{r,v} - \charX{s}{r^\prime, v^\prime} ) \big|^{2} \,  \mathrm{d}\msolinit{r^\prime, v^\prime} \, \mathrm{d}\msolinit{r,v} \\
             &\leq \varepsilon^2 \, .
         \end{align*}
         Thus, we have shown that $\dx{t}(V_t - \bar{v})$ is uniformly continuous with values in $L^{2} (f_0;\R^d)$. {B}arb\u alat's Lemma \ref{Lbarbalat} implies that $\dx{t}(V - \bar{v}) \stackrel{t \to \infty}{\to} 0$ in $L^{2} (f_0;\R^d)$. Since we have already shown that $A_{1,t}$ vanishes as $t \to \infty$, we also obtain that the force term $A_{2,t}$ converges to zero. 
\end{proof}

    Having established Theorem \ref{thm:gradHconvergence}, we now turn to the remaining proofs of Counterexample \ref{Cornoncompactness} and Theorem \ref{thm:asymptotics}. Both proofs exploit the time evolution of a suitable weighted second moment of the position coordinate, which we will introduce below.
	\begin{notation}
		We introduce the functions 
		\begin{align}
			\cappsi \colon \R_{\geq 0} &\to \R_{\geq 0}, \, x \mapsto \int_0^x r \psi(r) \, \mathrm{d}r, \\
			 \Pfunc \colon \Pp{2}(\R^d \times \R^d) &\to \R, \,
            f \mapsto \int (v-\bar{v}) \cdot r \, \mathrm{d} f(r,v) + \frac{1}{2} \int \int \cappsi \big( \abs{r-r^\prime)} \big) \, \mathrm{d}f(r,v) \, \mathrm{d}f(r^\prime, v^\prime) \, , \label{eq:defP} \\ 
            \text{ where } \bar{v} &= \int v \, \mathrm{d}f(r,v) \, . \nonumber
		\end{align} 
        Due to $0< \underline{\psi} \leq \psi(x) \leq \norm{\psi}_\infty$ for all $ x \in \R_{\geq 0}$, we obtain the corresponding bound $\underline{\psi} x^{2} \leq 2\Psi(x) \leq \norm{\psi}_\infty x^2$. Thus, all integrals in \eqref{eq:defP} are well-defined and we also conclude that $\Pfunc$ is continuous with respect to $\Pp{2}$-convergence (see \cite[Theorem $6.9$]{Villani}).  
	\end{notation}
    The functional $\Pfunc$ plays an important role in the stability analysis since the second moment of the position coordinates can be upper bounded by $\Pfunc$. Moreover, it is a straightforward computation using the characteristic equations \eqref{eq:ODEchar} to compute the derivative of $\Pfunc$ along trajectories.
	\begin{proposition}\label{prop:derivPfunctional}
		For every initial condition $f_0 \in \Ppo{\infty}(\R^d \times \R^d)$, let $(f_t)_{t \geq 0}$ denote the solution to \eqref{eq:weakform}. Then for $t \geq 0$, it holds that
		\begin{align}
			\dx{t} \Pfunc (f_t) = \int \abs{ v - \bar{v}  }^{2} \, \mathrm{d} \msolt{t}{r,v} - \frac{1}{2} \int \int (r-r^\prime) \cdot \nabla \mathcal{V}(r-r^\prime) \, \mathrm{d} \msolt{t}{r,v} \, \mathrm{d}\msolt{t}{r^\prime, v^\prime} \, . \label{eq:Pderiv}
		\end{align} 
	\end{proposition}
    We can now restate and prove Counterexample \ref{Cornoncompactness} in a slightly more explicit way. 
\begin{counterexample*}(Refined version of Counterexample \ref{Cornoncompactness}) \\
        Assume that $\nabla \mathcal{V}$ is uniformly continuous and $\nabla \mathcal{V}(x) \cdot x < 0$ for all $x \in \R^d\setminus\{ 0 \}$. Then for every $f_0 \in \Ppo{\infty} (\R^d \times \R^d)$ which satisfies either $P(f_0) > 0$ or $P(f_0) = 0$ and $\frac{\mathrm{d}}{\mathrm{d}t} \Big|_{t = 0} P(f_0) > 0$, the corresponding solution $(f_t )_{t \geq 0}$ to \eqref{eq:weakform} does not have a limit point in $\mathcal{P}_2(\R^d \times \R^d)$ as $t \to \infty$. That is, for any sequence $(t_n)_{n \in \N} \subseteq \R_{\geq 0} $ such that $t_n \stackrel{n \to \infty}{\to} \infty$, the sequence $(f_{t_n})_{n \in \N}$ does not converge in $\Pp{2}(\R^d \times \R^d)$.
    \end{counterexample*}
    \begin{proof}
        Aiming for a contradiction, we assume that there exists $(t_n)_{n \in \N} \subseteq \R_{\geq 0}$ satisfying $t_n \stackrel{n \to \infty}{\to} \infty $ and $f_\infty \in \Pp{2}(\R^d \times \R^d)$ such that $f_{t_n} \stackrel{n \to \infty}{\to} f_\infty$ in $\Pp{2}(\R^d \times \R^d)$. In particular, we have 
        \begin{align}
            \sup_{n \in \N} \int  \abs{r}^{2} \, \mathrm{d} \msolt{t_n}{r,v} < \infty \, . \label{Eqmombound}
        \end{align}
        By using convergence of velocities in Theorem \ref{thm:gradHconvergence} and the Cauchy-Schwarz inequality, we see that 
        \begin{align*}
            \int r \cdot (v-\bar{v}) \, \mathrm{d}\msolt{t_n} {r,v} \stackrel{n \to \infty}{\to} 0 \, .
        \end{align*}
        Moreover, since the forces also converge to zero (see \eqref{eq:Forceconvergence}), it follows that 
        \begin{align}
            \Big| &\int r \cdot \int \nabla \mathcal{V}(r-r^\prime) \, \mathrm{d}\msolt{t_n}{r^\prime, v^\prime} \, \mathrm{d}\msolt{t_n}{r,v} \Big|^{2} \nonumber \\
            &\leq \Big( \int \abs{r}^{2} \, \mathrm{d}\msolt{t_n}{r,v} \Big) \Big( \int \Big| \int \nabla \mathcal{V}(r-r^\prime) \, \mathrm{d}\msolt{t_n}{r^\prime,v^\prime} \Big|^{2} \, \mathrm{d}\msolt{t_n}{r,v}\Big) \stackrel{n \to \infty }{\to} 0 \, . \label{EqVirialconv}
        \end{align}
        Proposition \ref{prop:derivPfunctional} and the condition $\nabla \mathcal{V}(x) \cdot x \leq 0$ for all $x \in \R^d$ imply that the map  $t \mapsto \Pfunc (f_t)$ is non-decreasing. Moreover, by combining the monotonicity of $\Pfunc$ with boundedness of the second moments \eqref{Eqmombound} and the upper bound $\Psi (x) \leq \norm{\psi}_\infty x^2/{2}$, we conclude that 
        \begin{align*}
            \sup\limits_{t\geq 0} \Pfunc (f_t) = \sup\limits_{n \in \N} \Pfunc (f_{t_n}) < \infty \, .
        \end{align*}
         Therefore, the monotone limit
        \begin{align*}
            \Pfunc_\infty \coloneq \lim\limits_{t \to \infty} \Pfunc (f_t) = \lim\limits_{n \to \infty} \Pfunc (f_{t_n})
        \end{align*}
        exists and is finite. By assumption, it holds that either $\Pfunc (f_0) > 0$ or $\Pfunc (f_0) = 0$ and $\frac{\mathrm{d}}{\mathrm{d}t} \Big|_{t = 0} \Pfunc (f_0) > 0$ and therefore there exists $\delta > 0$ such that $\Pfunc (f_t) > 0$ for all $t \in (0,\delta)$, which implies that $P_\infty > 0$. \\
        As a next step, we show that $f_\infty = \delta_{(0,\bar{v})} $. The first moments of $f_\infty$ are given by 
        \begin{align}
            \int v \, \mathrm{d}f_\infty(r,v) = \lim_{n \to \infty} \int v \, \mathrm{d}\msolt{t_n}{r,v} = \bar{v}  \text{ and } 
            \int r \, \mathrm{d}f_\infty(r,v) =\lim_{n \to \infty} \int r \, \mathrm{d}\msolt{t_n}{r,v} = 0  . \label{eq:momfinfty}
        \end{align}
        Moreover, since $\nabla \mathcal{V}$ is bounded, we have 
        \begin{align}
            \int &\int \nabla \mathcal{V}(r-r^\prime) \cdot (r-r^\prime) \, \mathrm{d}f_\infty (r,v) \, \mathrm{d}f_\infty(r^\prime, v^\prime) \nonumber \\
            &= \lim\limits_{n \to \infty} \int \int \nabla \mathcal{V}(r-r^\prime) \cdot (r-r^\prime) \, \mathrm{d}\msolt{t_n}{r,v} \, \mathrm{d}\msolt{t_n}{r^\prime, v^\prime} \nonumber \\
            &= \lim\limits_{n \to \infty} 2 \int r \cdot \int \nabla \mathcal{V}(r-r^\prime) \, \mathrm{d}\msolt{t_n}{r^\prime, v^\prime} \, \mathrm{d}\msolt{t_n}{r,v} = 0 \, . \label{eq:momentlimit}
        \end{align}
        In the last step, we have used the bound \eqref{EqVirialconv}. Due to $\nabla \mathcal{V}(x) \cdot x < 0$ for all $0 \neq x \in \R^d$, we see that the diagonal $\{ r^\prime = r \} $ has full $f_\infty \otimes f_\infty$ measure. In combination with \eqref{eq:momfinfty}, we conclude that $(p_r)_{\#} f_\infty = \delta_{0}$, where $p_r \colon \R^d \times \R^d \to \R^d $ denotes the projection onto the first component. Similarly, the velocity component satisfies
        \begin{align*}
            \int \abs{v-\bar{v}}^{2} \, \mathrm{d}f_\infty(r,v) = \lim\limits_{n \to \infty } \int \abs{v-\bar{v}}^{2} \, \mathrm{d}\msolt{t_n}{r,v} = 0 \, . 
        \end{align*}
        In summary, we have shown that $f_{t_n}  \stackrel{n \to \infty}{\to} \delta_{(0,\bar{v})}$ in $\mathcal{P}_2(\R^d \times \R^d)$. At this point, we reach the contradiction
        \begin{align*}
            0 < P_\infty = \lim_{n \to \infty} \Pfunc (f_{t_n}) = \Pfunc (\delta_{(0,\bar{v})}) = 0 \, . \qquad \qedhere
        \end{align*}
    \end{proof}
    In the previous counterexample, we have seen that relative compactness in $\Pp{2}$ can fail if the term 
    \begin{align*}
        \frac{1}{2} \int \int (r-r^\prime) \cdot \nabla \mathcal{V}(r-r^\prime) \, \mathrm{d} \msolt{t}{r,v} \, \mathrm{d}\msolt{t}{r^\prime, v^\prime}
    \end{align*}
    has a negative sign. However, if this term is nonnegative provided that $\int \abs{r}^2 \, \mathrm{d} \msolt{t}{r,v}$ is sufficiently large, then we can ensure that second moments remain bounded, see Theorem \ref{thm:asymptotics}. Before giving a proof, we start with a proposition.
    \begin{proposition}\label{prop:compactness}
        Let $\mathcal{F} \subseteq \Pp{2}(\R^d \times \R^d)$ satisfy 
        \begin{align*}
            \sup\limits_{\mu \in \mathcal{F}} \int \abs{(r,v)}^2 \, \mathrm{d}\mu(r,v) < \infty. 
        \end{align*}
        Then $\mathcal{F}$ is relatively compact in $\Pp{p} (\R^d \times \R^d)$ for all $p \in [1,2)$. 
    \end{proposition}
    \begin{proof}
        For $R > 0$, we have 
        \begin{align*}
            \sup\limits_{\mu \in \mathcal{F}} \int_{\abs{r,v}\geq R} \abs{(r,v)}^p \, \mathrm{d} \mu(r,v) \leq R^{p-2} \,   \sup\limits_{\mu \in \mathcal{F}} \int_{\abs{r,v}\geq R} \abs{(r,v)}^2 \, \mathrm{d} \mu(r,v)
            \stackrel{R \to \infty}{\to} 0 \, .
        \end{align*}
        Thus, precompactness of $\mathcal{F}$ in $\Pp{p}(\R^d \times \R^d)$ follows by using \cite[Proposition $2.2.3$]{panaretos2020invitation}. 
    \end{proof}

   \begin{proof}[Proof of Theorem \ref{thm:asymptotics}] 
         Fix $t \geq 0$. We can assume that $\norm{\charX{t}{\cdot,\cdot } }_{L^{2}} > r_0$ otherwise \eqref{eq:momboundattr} is satisfied due to $\norm{\psi}_\infty / {\underline{\psi}} \geq 1$. We define 
        \begin{align*}
            t^* \coloneq \inf \{ s \in [0,t] \mid \forall \tau \in [s,t] \ \colon \norm{\charX{\tau}{\cdot,\cdot}}_{L^2} > r_0 \}  \, . 
        \end{align*}
        We note that $\norm{\charX{t^*}{\cdot, \cdot}}_{L^2} \leq \max\{ r_0, \norm{\charX{0}{\cdot, \cdot} }_{L^{2}} \} $. For $s \in [t^*, t]$, Proposition \ref{prop:derivPfunctional} yields 
        \begin{align}
            \dx{s}& \Big[ \int \charX{s}{r,v} \cdot (\charV{s}{r,v}-\bar{v}) \, \mathrm{d}\msolinit{r,v} \nonumber 
            + \frac{1}{2} \int \int \Psi \big(\abs{\charX{s}{r,v}- \charX{s}{r^\prime,v^\prime} } \big) \, \mathrm{d}\msolinit{r,v} \, \mathrm{d}\msolinit{r^\prime, v^\prime} \Big] \nonumber \\ 
            &= 
            - \frac{1}{2} \int \int \nabla \mathcal{V}(\charX{t}{r,v} - \charX{t}{r^\prime, v^\prime} ) \cdot (\charX{t}{r,v} - \charX{t}{r^\prime, v^\prime}  ) \, \mathrm{d}\msolinit{r,v} \, \mathrm{d}\msolinit{r^\prime, v^\prime} \nonumber \\
            &\quad + \int \abs{\charV{s}{r,v} - \bar{v}}^{2} \, \mathrm{d}\msolinit{r,v} \nonumber  \\
            &\leq \norm{\charV{s}{\cdot,\cdot} - \bar{v}}^{2}_{L^{2}} \, . \label{eq:pderivest}
        \end{align}
        For arbitrary $s \in [t^* , t]$, we obtain 
        \begin{align*}
            \dx{s} \frac{1}{2} \norm{\charX{s}{\cdot, \cdot} }_{L^{2}}^{2} 
            + \frac{1}{2} \underline{\psi} \norm{\charX{s}{\cdot, \cdot} }_{L^{2}}^{2}  
            \leq \Pfunc(f_s)
            \leq 
            \Pfunc (f_{t^*}) + \int_{t^{*}}^{s} \norm{\charV{\tau}{\cdot,\cdot} - \bar{v}}^{2}_{L^{2}} \, \mathrm{d}\tau \, , 
        \end{align*}
        where the second inequality above follows by integrating \eqref{eq:pderivest} from $t^*$ to $s$. Gronwall's lemma implies that 
        \begin{align}
            \frac{1}{2} \norm{\charX{s}{\cdot, \cdot} }_{L^{2}}^{2} &\leq \frac{1}{2} \norm{\charX{t^*}{\cdot, \cdot} }_{L^{2}}^{2}
            \mathrm{e}^{-\underline{\psi} (s-t^*)} 
            + \int_{t^*}^{s} \mathrm{e}^{-\underline{\psi} (s-\tau)} \Pfunc (f_{t^*}) \, \mathrm{d}\tau \nonumber \\
            &\quad + \int_{t^*}^{s} \mathrm{e}^{-\underline{\psi} (s-\tau)} \int_{t^{*}}^{\tau} \norm{ \charV{\sigma}{\cdot,\cdot} - \bar{v}}^{2}_{L^{2}} \, \mathrm{d}\sigma \, \mathrm{d}\tau \nonumber \\
            &= \frac{1}{2} \norm{\charX{t^*}{\cdot, \cdot} }_{L^{2}}^{2} \mathrm{e}^{-\underline{\psi} (s-t^*)}
            + \Pfunc (f_{t^*}) \frac{1-\mathrm{e}^{-\underline{\psi}(s-t^*) }}{\underline{\psi}} \nonumber \\ 
            &\quad +  \int_{t^*}^{s} \frac{1-\mathrm{e}^{-\underline{\psi}(s-\sigma) }}{\underline{\psi}} \norm{\charV{\sigma}{\cdot,\cdot} - \bar{v}}^{2}_{L^{2}} \, \mathrm{d}\sigma \nonumber \\
            &\leq \frac{1}{2} \norm{\charX{t^*}{\cdot, \cdot} }_{L^{2}}^{2} + \frac{1}{\underline{\psi}} \Pfunc (f_{t^*}) + \frac{1}{\underline{\psi}} \int_{t^*}^{s} \norm{\charV{\sigma}{\cdot,\cdot} - \bar{v}}^{2}_{L^{2}} \, \mathrm{d}\sigma \nonumber \\
            &\leq \frac{1}{2} \max \Big\{ r_0^{2}, \int \abs{r}^2 \, \mathrm{d}\msolinit{r,v} \Big\} 
            + \frac{1}{\underline{\psi}} \Pfunc (f_{t^*}) + \frac{1}{\underline{\psi}} \int_{t^*}^{s} \norm{\charV{\sigma}{\cdot,\cdot} - \bar{v}}^{2}_{L^{2}} \, \mathrm{d}\sigma \label{eq:RsL2} \, .
        \end{align}
        For the second and third term in \eqref{eq:RsL2}, we have the upper bounds
        \begin{align}
            \Pfunc (f_{t^*}) &= \int \charX{t^*}{r,v}  \cdot ( \charV{t^*}{r,v} - \bar{v}) \, \mathrm{d}\msolinit{r,v} \nonumber \\
            &\quad + \frac{1}{2} \int \int  \Psi \big(\abs{ \charX{t^*}{r,v} - \charX{t^*}{r^\prime, v^\prime} } \big) \, \mathrm{d}\msolinit{r,v} \, \mathrm{d}\msolinit{r^\prime,v^\prime} \nonumber \\
            &\leq \norm{\charX{t^*}{\cdot, \cdot} }_{L^{2}}  \norm{\charV{t^*}{\cdot, \cdot} - \bar{v}}_{L^{2}} 
            + \frac{\norm{\psi}_\infty}{2} \norm{\charX{t^*}{\cdot, \cdot} }_{L^{2}}^{2} \nonumber \\
            &\leq \max \big\{ r_0, \norm{\charX{0}{\cdot, \cdot}}_{L^{2}} \big\} \big( 2 (\mathcal{H}(f_{t^*}) - \underline{\mathcal{V}}) \big)^{\frac{1}{2}} 
            + \frac{\norm{\psi}_\infty}{2} \max \big\{ r_0^{2}, \norm{\charX{0}{\cdot, \cdot}}_{L^{2}}^{2} \big\} \nonumber \\
            &\leq \max \Big\{ r_0, \Big( \int  \abs{r}^2 \, \mathrm{d}\msolinit{r,v} \Big)^{\frac{1}{2} } \Big\} \big( 2(\mathcal{H}(f_{0}) - \underline{\mathcal{V}}) \big)^{\frac{1}{2}} 
             + \frac{\norm{\psi}_\infty}{2} \max \Big\{ r_0^{2}, \int \abs{r}^2 \, \mathrm{d}\msolinit{r,v} \Big\}  \, , \label{eq:Pftbound} \\  
        \int_{t^*}^{s} &\norm{\charV{\sigma}{\cdot,\cdot} - \bar{v}}^{2}_{L^{2}} \, \mathrm{d}\sigma 
            \leq \int_{t^*}^s - \frac{1}{\underline{\psi}} \dx{\sigma} \mathcal{H}(f_\sigma) \, \mathrm{d}\sigma 
            = \frac{1}{\underline{\psi}} (\mathcal{H}(f_{t_*}) - \mathcal{H}(f_s) ) 
            \leq \frac{1}{\underline{\psi}} (\mathcal{H}(f_{0}) - \underline{\mathcal{V}} ) \, . \label{eq:kinbound}
        \end{align}
        By combining the inequalities \eqref{eq:RsL2}, \eqref{eq:Pftbound} and \eqref{eq:kinbound}, we arrive at the desired bound \eqref{eq:momboundattr}. \\ 
        In summary, we have now shown that
        \begin{align*}
            \sup_{t \geq 0} \int \abs{r}^2 \, \mathrm{d} \msolt{t}{r,v} < \infty \, .
        \end{align*}
        If we additionally apply Lemma \ref{LVbound}, we see that the second moments of $(f_t)_{t \geq 0}$ are bounded uniformly with respect to $t$ and therefore Proposition \ref{prop:compactness} implies that $(f_t)_{t \geq 0}$ is precompact in $\Pp{p}$ for all $p \in [1,2)$. By applying Theorem \ref{thm:gradHconvergence}, we see that any $\Wpdist_p$-limit point $f_\infty \in \Pp{p}(\R^d \times \R^d)$ of $(f_t)_{t \geq 0}$ satisfies 
        \begin{align*}
            \int \Big| \int \nabla \mathcal{V}(r-r^\prime) \, \mathrm{d}f_\infty(r^\prime, v^\prime) \Big|^2 \, \mathrm{d}f_\infty (r,v) &= 0 \, , 
            \text{ and } \int \abs{v-\bar{v}}^2 \, \mathrm{d}f_\infty(r,v) = 0 \, .
        \end{align*}
        Moreover, since the second moments are lower semicontinuous with respect to weak convergence (see \cite[Lemma $12.8$]{Ambrosio}) and therefore also w.~r.~t.~ $\Wpdist_p$-convergence, we obtain that 
        \begin{align*}
            \int \abs{(r,v)}^2 \, \mathrm{d}f_\infty(r,v) \leq \liminf_{t \to \infty } \int \abs{(r,v)}^2 \, \mathrm{d} \msolt{t}{r,v} < \infty, 
        \end{align*}
        and therefore $f_\infty \in \Pp{2}(\R^d \times \R^d)$. In summary, we have shown that every $\Wpdist_p$-limit point of the $\Wpdist_p$-precompact family $(f_t)_{t \geq 0}$ belongs to $\mathcal{L}$, which implies the desired convergence \eqref{eq:Wpdistconv}. 
    \end{proof}

     \begin{remark}
         If the binary interactions are strictly attractive, that is, if $\nabla \mathcal{V}(x) \cdot x \geq 0$ for all $x \in \R^d $, then \eqref{eq:attrcase} holds with $r_0 = 0$ for every $f_0 \in \Ppo{\infty}(\R^d \times \R^d)$. \\
        On the particle level, that is, when $f_0$ is of the form 
            \begin{align*}
                f_0 = \masssum{j}{N} \delta_{(r_j, v_j)}
            \end{align*}
            for some $N \in \N$ and $(r_j, v_j) \in \R^d \times \R^d, \, j = 1, \ldots, N$, and if, in addition
            \begin{align*}
                \lim\limits_{\abs{x} \to \infty} \nabla \mathcal{V}(x) \cdot x = \infty,
            \end{align*}
             then \eqref{eq:attrcase} holds for sufficiently large $r_0 = r_0(N)$.
    \end{remark}

    We conclude this section by illustrating condition \eqref{eq:attrcase} for the (regularized) Morse potential.
    
    \begin{example}\label{ex:morsecounterex}
        Consider the (regularized) Morse potential 
        \begin{align}
            \mathcal{V}(x) = R \mathrm{e}^{-\abs{x}^2 / {r} } - A \mathrm{e}^{- \abs{x}^2 / a} , \text{ where } R \geq 0, \,  r,a,A > 0 \text{ and } a > r \, . \label{eq:regMorse}
        \end{align}
        If $\frac{a R}{r A} \leq 1$, then the interactions are strictly attractive, that is $\nabla \mathcal{V}(x) \cdot x \geq 0$ for all $x \in \R^d$. However, if $\frac{a R}{r A} > 1$, then \eqref{eq:attrcase} is not satisfied for a three particle initial condition 
        \begin{align*}
            f_0 = \masssum{j}{3} \delta_{(r_j, v_j)}, \, \text{ with } (r_j, v_j) \in \R^d \times \R^d \text{ mutually distinct.}  
        \end{align*} 
        To see this, we note that 
        \begin{align*}
            \nabla \mathcal{V}(x) \cdot x = 2 \abs{x}^2 \Big( \frac{A}{a} \mathrm{e}^{ - \abs{x}^2 /{a}} - \frac{R}{r} \mathrm{e}^{-\abs{x}^2/{r} } \Big) &< 0, \text{ for } 0 < \abs{x}^2 < \frac{ra}{a-r} \ln \Big( \frac{aR}{rA} \Big) = : d_0^2  , \\
            \text{ and } \lim\limits_{\abs{x} \to \infty} \nabla \mathcal{V}(x) \cdot x &= 0 \, .
        \end{align*}
        We fix $x,\nu \in \R^d$ with $\abs{\nu} = 1$ and $\abs{x}^2 \in (0,d_0^2 ) $ and for $n \in \N$, we define $R_n \in L^2(f_0;\R^d )$ by 
        \begin{align*}
            R_n(r_1, v_1) \coloneq n \nu, \, R_n(r_2, v_2) \coloneq n \nu + x, R_n(r_3,v_3) \coloneq - 2n \nu - x \, .  
        \end{align*}
        Then 
        \begin{align*}
            \int R_n(r^\prime, v^\prime) \, \mathrm{d} \msolinit{r^\prime, v^\prime} = 0, \text{ and } \norm{R_n}_{L^2(f_0)} \stackrel{n \to \infty}{\to} \infty \, .
        \end{align*}
        In addition, for $i \neq j$ with $\{ i, j \} \neq \{ 1,2 \} $, it holds that 
        \begin{align*}
            \abs{R_n(r_i,v_i) - R_n(r_j,v_j) } \stackrel{n \to \infty}{\to} \infty 
            \text{ while } R_n(r_1,v_1) - R_n(r_2,v_2)  = x \, .
        \end{align*}
        Therefore, 
        \begin{align}
            &\int \int \nabla \mathcal{V}(R_n(r,v) - R_n(r^\prime, v^\prime)) \cdot (R_n(r, v) -R_n(r^\prime, v^\prime)) \, \mathrm{d} \msolinit{r,v} \, \mathrm{d}\msolinit{r^\prime, v^\prime} \label{eq:virintegral} \\ 
            &= \frac{1}{9} \sum_{i, j = 1}^3  \nabla \mathcal{V} (R_n(r_i,v_i) - R_n(r_j, v_j)) \cdot (R_n(r_i, v_i) - R_n(r_j, v_j)) \stackrel{n \to \infty}{\to } \frac{2}{9} \nabla \mathcal{V}(x) \cdot x < 0 . \nonumber
        \end{align}
        In particular, the term \eqref{eq:virintegral} is eventually negative, which shows that \eqref{eq:attrcase} is not satisfied.
    \end{example}

    \section{Numerical results} \label{sec:numerics}
In the present section, we further assess Conjecture \ref{con:boundedness} by providing numerical computations for the (regularized) Morse potential \eqref{eq:regMorse} and different initial conditions.
In all examples, we use a constant alignment function $\psi(x) = 1/{2}, \, x \geq 0$ and we integrate the ODE system \eqref{eq:ODEintro} in dimension $d=2$ using an eighth order Runge--Kutta scheme to compute the center of mass coordinates $r_i = x_i - \bar{x}, \, i = 1, \ldots, N$. In all cases examined, the dynamics appear to become numerically stationary after a terminal time $T$, after which the simulation is stopped. In particular, this numerical behavior is consistent with the boundedness of trajectories asserted in Conjecture \ref{con:boundedness}. Below each figure, we provide the number of particles, the parameters of the interaction potential, the distribution of the initial conditions we sampled from and the terminal time $T$. The Python code can be found at \cite{Zenodofiles}.
Unlike in our original article \cite[Example $3.9$]{JaTo2024}, the (regularized) Morse potential \eqref{eq:regMorse} is in general not covered by the Stability Theorem \ref{thm:asymptotics} as shown in Example \ref{ex:morsecounterex}. Therefore, we focus our numerical analysis on this potential and we fix $A=r=1$. Then, if $aR \leq 1$, the regularized Morse potential is strictly attractive, otherwise it has a global minimum at $\abs{x}^2 = d_0^2 = a /{(a-1)} \ln(aR)$.  
Particles mutually repulse each other for interparticle distances less than $d_0$ and attract each other for larger distances.
To capture a range of behaviors that can occur for this potential, we consider the following different parameter regimes:
\begin{enumerate}
    \item Strong short-range repulsion, that is $1 \ll Ra$.
    \item Balanced attraction and repulsion, that is, $Ra$ is of order one.
   \item Strict attractivity, that is $Ra \leq 1$.
\end{enumerate}
In the first two regimes, we also vary the initial conditions to obtain different interparticle distances compared to the reference distance $d_0$. Numerical results for the strong short-range repulsion regime are shown in Figure \ref{fig:MorseHugeAlpha}.
\FloatBarrier
\begin{figure}[!htbp]
    \centering
    \begin{subfigure}[t]{0.48\linewidth}
        \includegraphics[width=\linewidth]{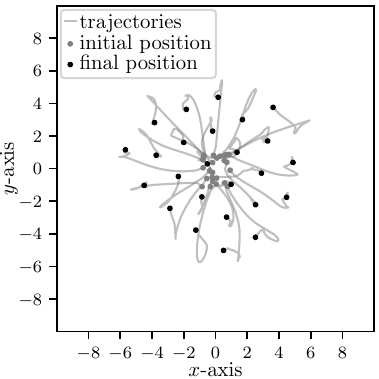}
        \caption{Initial positions at time $0$, final positions at time $T$ and the trajectories showing the movement of the particles from $0$ to $T$.}
        \label{fig:MorseHugeAlpha1}
    \end{subfigure}\ \
     \begin{subfigure}[t]{0.48\linewidth}
        \includegraphics[width=\linewidth]{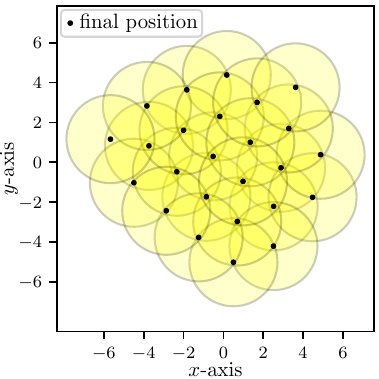}
        \caption{Final positions as in Figure \ref{fig:MorseHugeAlpha1}. Around each particle, a yellow disk of radius $d_0$ is shown.}
        \label{fig:MorseHugeAlpha2}
    \end{subfigure}
    \caption{Trajectories of the positions for the Morse potential \eqref{eq:regMorse} with dominant repulsion. The initial positions and velocities are sampled from the uniform distribution on $[-1,1]^2$. The parameters are $R=10.0$, $a=5.0$, $d_0=2.2$, $T=5000$.}
    \label{fig:MorseHugeAlpha}
\end{figure}
\FloatBarrier \noindent
The trajectories in Figure \ref{fig:MorseHugeAlpha1} show that the particles initially accelerate radially outwards due to their mutual repulsion. As time evolves, the particles decelerate due to the velocity alignment and the mutual long-range attraction. Finally, the particles settle into a tightly packed, lattice-like structure. In Figure \ref{fig:MorseHugeAlpha2}, disks of radius $d_0$ centered at the particle positions are superimposed on the final configuration. 
The distance between particles in the interior of the lattice is slightly smaller than $d_0$. Nevertheless, the total force acting on each particle vanishes. \\
Numerical results for the balanced repulsion and attraction regime with different initial conditions are shown in Figure \ref{fig:MorseCircle}.
\begin{figure}[!htbp]
    \centering
    \begin{subfigure}[t]{0.48\linewidth}%
    \centering
    \includegraphics[width=\linewidth]{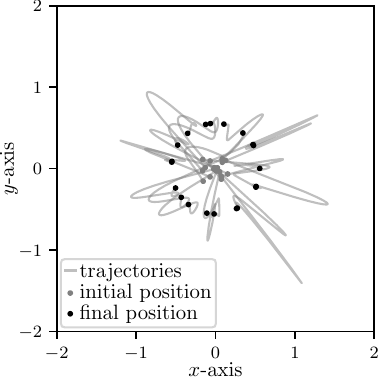}%
    \caption{Initial positions sampled from the uniform distribution on $[-d_0/(4\sqrt{2}),d_0/(4\sqrt{2})]^2$. Initial velocities sampled from the uniform distribution on $[-1,1]^2$.}%
        \label{fig:MorsecircleInitrepulsive}%
    \end{subfigure}\hspace{0.019\linewidth}%
     \begin{subfigure}[t]{0.48\linewidth}%
     \centering
        \includegraphics[width=\linewidth]{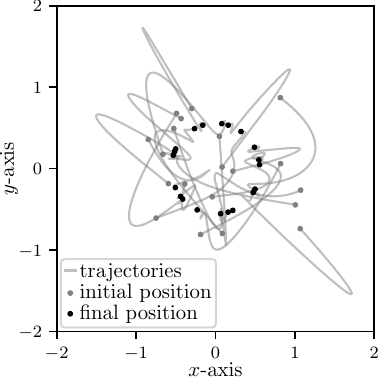}%
        \caption{Initial positions and velocities sampled from the uniform distribution on $[-1,1]^2$.}%
        \label{fig:MorsecircleInitBalanced}%
    \end{subfigure}
    \caption{Trajectories of the positions for the Morse potential \eqref{eq:regMorse} with balanced attraction and repulsion. The parameters are $N=20$, $R=0.4$, $a=5.0$, $d_0=0.93$, $T=600$.}
    \label{fig:MorseCircle}
\end{figure}
In Figure \ref{fig:MorsecircleInitrepulsive}, the initial conditions are chosen such that all interparticle distances are at most $d_0/2$, leading to mutual repulsion of all particles. This results in a rapid outward radial motion of the particles.
Due to the velocity alignment and the binary long-range attraction, the particles decelerate and then accelerate radially inward.
Subsequently, the particles oscillate around their final position before settling on a circle centered at the origin with radius $\approx 0.56$. 
An initial configuration, where both binary attraction and repulsion are present, is shown in Figure \ref{fig:MorsecircleInitrepulsive}. After an initial phase of complex irregular motion, the particles converge to a circular configuration centered at the origin with the same radius of approximately $0.56$ as in Figure \ref{fig:MorsecircleInitrepulsive}. 
For the same parameters as in Figure \ref{fig:MorseCircle}, convergence to a circular configuration also appears for more complex initial conditions. This is illustrated in Figure \ref{fig:morsenumerics:bigFourCenters}, where the initial positions are sampled from four distinct clusters.
\begin{figure}[!htbp]
    \centering
    \includegraphics[width=0.49\linewidth]{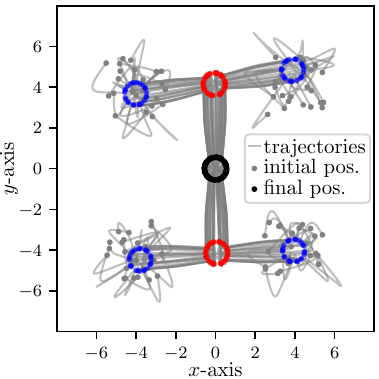}%
    \caption{Trajectories of the positions for the Morse potential \eqref{eq:regMorse}. The parameters are $N=100$, $R=0.4$, $a=5.0$, $d_0=0.93$, $T=50000$. Blue (resp.\, red) markers show positions at time $5000$ (resp.\, at time $25000$). The initial velocities are sampled from the uniform distribution on $[-1,1]^2$. For $s \in \{-4,4\}^2$, the initial positions of $25$ particles are sampled uniformly from $s+[-1.5,1.5]^2$.}
    \label{fig:morsenumerics:bigFourCenters}
\end{figure}
\FloatBarrier
    \begin{figure}[!htbp]
    \begin{subfigure}[t]{0.48\linewidth}%
        \centering%
        \includegraphics[width=\linewidth]{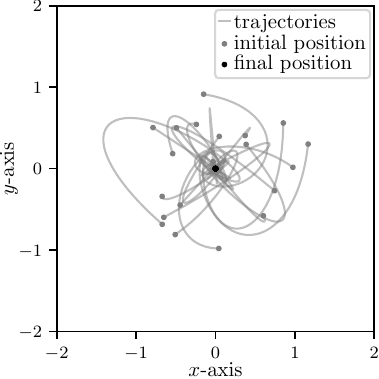}%
        \caption{The parameters are $N=20$, $R=0.2$, $a=2.5$, $T=600$.}%
        \label{fig:MorsecircleGlobalattractive}%
    \end{subfigure}\hspace{0.019\linewidth}%
    \begin{subfigure}[t]{0.48\linewidth}%
        \centering%
        \includegraphics[width=\linewidth]{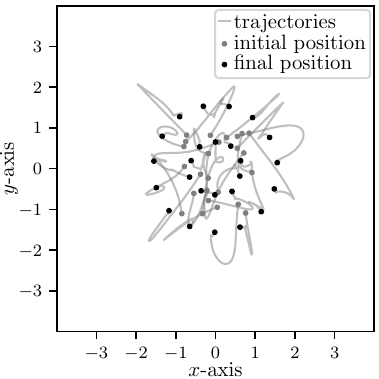}%
        \caption{The parameters are $N=25$, $R=1.5$, $a=5.0$, $d_0=1.6$, $T=2000$.}%
        \label{fig:MorseBigAlpha}%
    \end{subfigure}%
     \caption{Trajectories of the positions for the Morse potential \eqref{eq:regMorse}. The initial positions and velocities are sampled uniformly from $[-1,1]^2$.}%
     \label{fig:MorsePlotMultiple}%
\end{figure}
\FloatBarrier \noindent
The dynamics in Figure \ref{fig:morsenumerics:bigFourCenters} consists of three different phases. In the first phase, the particles settle towards a circular configuration (blue markers, time $t = 5000$) in each separate cluster. Subsequently, these four circles collapse into two distinct circles (red markers, $t = 25000$).
In the final phase, the two red circle collapse and form the familiar circular configuration centered at the origin with radius $0.56$ (at time $t = 50 000$).
To further explore the transition between the lattice-like (Figure \ref{fig:MorseHugeAlpha}) and circular final configuration (Figures \ref{fig:MorseCircle} and \ref{fig:morsenumerics:bigFourCenters}), we also simulate the intermediate repulsion strength $R = 1.5$, see Figure \ref{fig:MorseBigAlpha}. Here, the resulting final configuration consists of two concentric circles.
Finally, the results for the strictly attractive regime are shown in Figure \ref{fig:MorsecircleGlobalattractive}. The collapse of all particles to the origin in Figure \ref{fig:MorsecircleGlobalattractive} is in accordance with Theorem \ref{thm:asymptotics}. Indeed, due to $\nabla \mathcal{V}(x) \cdot x > 0$ for all $x \neq 0$, a similar argument as in the proof of Counterexample \ref{Cornoncompactness} shows that every element in the set $\mathcal{L}$ is a Dirac measure.
    
    \section*{Acknowledgments}
    This work was funded by the Deutsche Forschungsgemeinschaft (DFG, German Research Foundation) – Project-ID 531152215 – CRC 1701. We acknowledge the assistance of ChatGPT-5 mini for language suggestions, in accordance with the SIAM editorial policy.

        \putbib[arxivreferences]
    \end{bibunit}

    \clearpage
\setcounter{page}{1}
\setcounter{section}{0}
\setcounter{equation}{0}
\setcounter{figure}{0}
\setcounter{table}{0}
\stepcounter{partcounter}

\begin{center}
    {\LARGE Port-Hamiltonian structure of interacting particle systems and its mean-field limit \par}
    \vspace{0.5cm}
    {\large Birgit Jacob\footnote{Research Group Functional Analysis, \href{mailto:bjacob@uni-wuppertal.de}{bjacob@uni-wuppertal.de}} ,
Claudia Totzeck\footnote{Research Group Optimization, \href{mailto:totzeck@uni-wuppertal.de}{totzeck@uni-wuppertal.de}  } \par}
    \vspace{0.5cm}
    {IMACM, School of Mathematics and Natural Sciences, \\ University of Wuppertal, Germany \par}
    \vspace{0.5cm}
    {July 2024 \par}
\end{center}
\vspace{0.8cm}
\begin{abstract}
We derive a minimal port-Hamiltonian formulation of a general class of interacting particle systems driven by alignment and potential-based force dynamics which include the Cucker-Smale model with potential interaction and the second order Kuramoto model. The port-Hamiltonian structure allows to characterize conserved quantities such as Casimir functions as well as the long-time behaviour using a LaSalle-type argument on the particle level. It is then shown that the port-Hamiltonian structure is preserved in the mean-field limit and an analogue of the LaSalle invariance principle is studied in the space of probability measures equipped with the 2-Wasserstein-metric. The results on the particle and mean-field limit yield a new perspective on uniform stability of general interacting particle systems. Moreover, as the minimal port-Hamiltonian formulation is closed we identify the ports of the subsystems which admit generalized mass-spring-damper structure modelling the binary interaction of two particles. Using the information of ports we discuss the coupling of difference species in a port-Hamiltonian preserving manner.
\end{abstract}

\begin{minipage}{0.9\linewidth}
 \footnotesize
\textbf{AMS classification:} 37K45, 82C22, 93A16.
\medskip

\noindent
\textbf{Keywords:} Port-Hamiltonian systems, interacting particle systems, mean-field limit, long-time behaviour
\end{minipage}
 \phantomsection   
    \begin{bibunit}[abbrv]
    \section{Introduction}
Since the seminal works by Reynolds \cite{reynolds1987flocks} in 1987, Vicsek, Czirók, Ben-Jacob, Cohen,  Shochet \cite{vicsek1995novel} in 1995, and Cucker, Smale  \cite{CuckerSmale} in 2007, mathematical modelling of interacting particle systems and the structural analysis of these models attracts the attention of researchers from applied mathematics. One of the fascinating aspects is that simple interaction rules imposed for the binary interaction of two particles lead to collective behaviour of the whole crowd. In fact, often slight parameter changes can turn dynamics of ordinary differential equations (ODEs), where the particles form rings that resemble the milling of birds into clumps \cite{Dorsogna2006self}. 

As the analysis of these pattern formation is difficult on the particle level, where the position and velocity information of each member of the crowds is explicitly captured by the equations, more abstract formulations of the dynamics were proposed. Sending the number of particles to infinity leads to the so-called mean-field formulation of the crowd \cite{golse2016dynamics}. Here, the exact position and velocity information is averaged and only the probability of finding a particle at a certain time in a certain position with a certain velocity is described. The binary interaction structure on the particle level turns into an convolution that yields a nonlinear, nonlocal partial differential equation (PDE) as evolution equation on the mean-field level. Instead of a huge system of ODEs the mean-field equation requires the solution of a high-dimensional PDE. To further reduce the dimensionality of the problem, also hydro-dynamic descriptions were proposed, there the velocity information is averaged over space, leading to a coupled PDE-system that is only space-dependent \cite{hydro}. In contrast to the passage from mean-field level to hydro-dynamic formulations, which in general require formal closure relations, the rigorous relationship of particle and mean-field limit is well-understood, see for example \cite{golse2016dynamics}. For an detailed overview of different models on the various scales we refer to \cite{Shvydkoy}.

Many interacting particle systems are driven by two mechanisms: alignment in velocity and attraction/repulsion in space \cite{carrillo2017review}. Alignment is the main component of (generalized) Cucker-Smale dynamics and leads to bird-like behaviour. Many different interaction kernels were proposed to fit the model closer to reality, see for example \cite{ahn2012collision,carrillo2017sharp,cucker2011general,motsch2011newmodel}. 
On the other hand, there are models that incorporate only attraction/repulsion forces leading to a three-phase interaction behaviour of long-range attraction, short-range repulsion and a mid-range comfort zone, where no interaction forces are present \cite{albi2013modeling,Dorsogna2006self,Cao2020MBE}. These models often admit a gradient-structure, that means the binary interaction forces are gradients of a prescribed interaction potential like the Morse potential \cite{hydro} and the force acting on one particle is the average of all these binary interaction forces.

The gradient-structure opens the toolbox of gradient flows and large deviation theory to study stability and long-time behaviour of the systems \cite{ambrosio2005gradient}. In the absence of a gradient structure, the analysis of the alignment part of the dynamics is generally more complex. However, starting with \cite{carrillo2010asymptotic} stability results for different setups in the Cucker-Smale context were established \cite{motsch2014review,park2010cucker,ha2019complete,choi2016cucker,cho2016emergence,erban2016cucker,barbaro2016phase,pignotti2017convergence,albi2014stability}. In the literature stable states of alignment dynamics are often called flocking, clustering or consensus solutions. 

In \cite{Cao2020MBE} flocking behaviour of a three-zone model was first investigated with the help of the energy of the system. Although we address a similar question, in this contribution we propose a novel viewpoint on interacting particle dynamics, namely a port-Hamiltonian one. Building on the discussion of \cite{Matei}, where the port-Hamilto\-nian system (PHS) formulation of a Cucker-Smale dynamics with repulsion and attraction is interpreted as generalized mass-spring-damper system, we propose a minimal PHS representation of interacting particle systems that does not require an increase of the phase space. Indeed, the formulation in \cite{Matei} introduces the relative positions of all particles as new variables, thereby increasing the state space dimension from $2Nd$ to $N(N-1)d/2 + Nd$ where $N$ denotes the number of particles and $d$ the space dimension. In the following, the port-Hamiltonian reformulation preserves the state space dimension which allows in particular to pass to the mean-field limit. 

The port-Hamiltonian formulation of interacting particles opens the door to the well-established theory of PHS for finite-dimensional systems, see
\cite{vanDerSchaft06,EbMS07,DuinMacc09}.  Well-known facts from PHS theory include the property that PHS are closed under network interconnection, that is, coupling of port-Hamiltonian systems again leads to a port-Hamiltonian system. Furthermore, the port-Hamiltonian approach is suitable for the investigation of  the qualitative solution behavior such as asymptotic stability and control questions, as it provides an energy balance. Moreover, the port-Hamiltonian structure allows to identify several conserved quantities namely the Hamiltonian and the so-called Casimir functions. The mean-field limit yields a connection to nonlinear, nonlocal infinite-dimensional PHS systems which are yet less explored.

Our main contributions are: the port-Hamiltonian formulation of an interacting particle systems with same state space dimension; structure preserving mean-field limit; characterization of long-time behavior on the particle and mean-field level and the characterization of conserved quantities such as the Hamiltonian and Casimir functions. This yields a new perspective on uniform stability and the dissipativity of interacting particle systems using the PH dissipativity inequality. Moreover, the identification of ports allows for PH structure preserving coupling of interacting particle systems of (different) species.

The article is organized as follows: we recall some background information on interacting particle systems in Section~\ref{sec:background}. Then we motivate and derive the port-Hamiltonian reformulation in Section~\ref{sec:phsFormulation}, that is used to discuss the Casimir function and stability properties of the systems based on LaSalle theory. In Section~\ref{sec:mean_field} we discuss the mean-field limit sending the number of particles $N\rightarrow\infty$ and show that the PH structure is conserved in the limit. Section~\ref{sec:coupling} discusses the coupling of (different) species in a PH structure preserving manner. The article concludes with a summary of the main ideas and an outlook to future work.

\section{Background on interacting particle systems}\label{sec:background}

We recall the classical formulation of interacting particle systems in position and velocity coordinates and the corresponding mean-field limit. Let us consider $N\in\N, N\ge2$ interacting particles in space dimension $d.$ We denote their positions by $x_i \colon [0,T]\rightarrow \R^d$ and their velocities by $v_i \colon [0,T]\rightarrow \R^d$ for $i=1,\dots,N,$ respectively. We collect the position and velocity information of all particles in the vectors $x=(x_i)_{i=1}^N$ and $v=(v_i)_{i=1}^N,$ respectively. The dynamics of the $i$-th particle is given by
\begin{subequations} \label{eq:xvparticles}
\begin{align}
    \frac{\dd}{\dd t} x_i &= v_i, \\
    \frac{\dd}{\dd t} v_i &= \frac{1}{N}\sumj \psi(|x_j - x_i|)(v_j - v_i) - \frac{1}{N} \sumj \nabla \V(x_i - x_j)=:F_i(t,x,v), \\
    x_i(0) &= \hat x_i, \qquad v_i(0) = \hat v_i.
\end{align}
\end{subequations}
Here, $\psi \colon \R \rightarrow \R_{\ge 0}$ models the strength of the velocity alignment and $\V \colon \R^d  \rightarrow \R$ denotes the potential modelling the binary interactions among the particles.  
For the forces resulting from the interactions we require that $\V$ is continuously partial differentiable satisfying  
\begin{equation}\label{eq:antisymm}
\nabla \V(x) = - \nabla \V(-x), \qquad x\in \R^d.
\end{equation}
This general class of interaction models contains  well-known examples:
\cite{CuckerSmale}, \cite{Matei},  Morse-interactions (sheep flocks, double and single milling birds) as proposed in \cite{Dorsogna2006self}, or herding dynamics \cite{Totzeck}. 

To obtain the existence and uniqueness of a global solution to the particle system by standard results from ODE theory, we make the following
\begin{assumption}\label{ass:particle}
\begin{itemize}
    \item [(1)] $F_i$ is continuous on $[0,T] \times \R^{dN} \times R^{dN}$ for all $i\in\{1,\dots,N\}$
    \item [(2)] For some $C>0$ it holds
    \[|F_i(t,x,v)| \le C(1+|x|+|v|) \quad \text{ for all }\quad i\in\{1,\dots,N\}, t\in[0,T] \text{ and } x,v\in \R^{dN}\]
    \item [(3)] For all $i\in\{1,\dots,N\}$, $F_i(t,x,v)$ is locally Lipschitz continuous w.r.t.~$x$ and $v$. In particular, for every compact set $K \subset \R^{dN} \times \R^{dN}$ there exists some $L_K>0$ such that 
    $F_i$ is on $K$ Lipschitz continuous  w.r.t.~$x$ and $v$ uniformly for all  $i\in\{1,\dots,N\}$.
\end{itemize}
\end{assumption}
\noindent
The following proposition ensures that \eqref{eq:xvparticles} admits unique solutions in $\calC^1([0,\infty),\R^{dN}\times\R^{dN})$ the set of all continuously differentiable functions $z:[0,\infty)\rightarrow \R^{dN}\times\R^{dN}$.

\begin{proposition}\label{prop:WellODE}\cite{teschl}
Let Assumption~\ref{ass:particle} hold, then for every initial condition $(\hat x, \hat v) \in \R^{dN}\times \R^{dN}$ there exists a unique global solution $(x,v) \in \calC^1([0,\infty),\R^{dN}\times\R^{dN})$ to  \eqref{eq:xvparticles}.
\end{proposition}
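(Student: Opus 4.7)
The plan is to apply standard Picard--Lindelöf machinery to the combined system obtained by stacking the equations in~\eqref{eq:xvparticles} into one ODE $\dot z = G(t,z)$ on $\R^{2dN}$, where $z = (x,v)$ and $G(t,z) = (v, F_1(t,x,v), \dots, F_N(t,x,v))$. First I would note that items~(1) and~(3) of Assumption~\ref{ass:particle} transfer immediately to $G$: the $x$-component is the linear map $(x,v)\mapsto v$, which is globally Lipschitz and continuous, and the $v$-component inherits continuity and local Lipschitz continuity from the $F_i$. Classical Picard--Lindelöf (as in \cite{teschl}) then yields, for each initial datum $(\hat x,\hat v)\in\R^{dN}\times\R^{dN}$, a unique maximal $\calC^1$-solution $z$ on some interval $[0,T_{\max})$.

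Next I would rule out finite-time blow-up. Writing $\|z(t)\|=\sqrt{|x(t)|^2+|v(t)|^2}$ and using item~(2) together with $|\dot x_i|=|v_i|\le\|z\|$, one obtains a constant $C'>0$ depending only on $C$ and $N$ with
\begin{equation*}
\tfrac{\dd}{\dd t}\|z(t)\| \;\le\; \|\dot z(t)\| \;\le\; C'\bigl(1+\|z(t)\|\bigr)
\end{equation*}
on $[0,T_{\max})$. Grönwall's inequality then gives $\|z(t)\|\le (\|z(0)\|+1)\,e^{C' t}-1$, so the solution is bounded on every finite subinterval of $[0,T_{\max})$. The standard continuation criterion then forces $T_{\max}=\infty$.

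Finally, the $\calC^1$-regularity on $[0,\infty)$ is automatic from the construction: $z$ is absolutely continuous and satisfies $\dot z(t)=G(t,z(t))$, so continuity of $G$ and of $z$ upgrades $\dot z$ to a continuous function, giving $z\in\calC^1([0,\infty),\R^{dN}\times\R^{dN})$.

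The only genuine step is the a priori bound that rules out blow-up, and even this is essentially a one-line Grönwall argument once the sublinear growth in item~(2) is invoked; everything else is a direct application of a textbook result, which is why the statement is attributed to \cite{teschl}. No substantive obstacle is expected.
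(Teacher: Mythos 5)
Your argument is correct and is essentially the standard Picard--Lindel\"of plus Gr\"onwall continuation argument that the paper itself does not spell out but simply delegates to the textbook reference \cite{teschl}; your write-up is precisely the proof behind that citation. The only cosmetic point is that $\frac{\dd}{\dd t}\|z(t)\|$ need not exist where $z(t)=0$, which is avoided by integrating $\|z(t)\|\le\|z(0)\|+\int_0^t\|\dot z(s)\|\,\dd s$ before applying Gr\"onwall.
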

As we are interested in the limiting behaviour as $N\rightarrow \infty$ we introduce the \textit{Wasserstein} or \textit{Monge-Kantorovich-Rubinstein} metric as distance measure between the particle and the density perspective. 
Let us denote by $\calP_2(\R^d)$ the space of Borel probability measures on $\R^d$ with finite $2$-nd moment. Equipping with the 2-Wasserstein distance, makes $\calP_2(\R^d)$ a complete metric space. We further denote $\calP_2^{ac}(\R^d)$ the subset of $\calP_2(\R^d)$ containing probability measures with Lebesgue density. For the sake of completeness we recall the 2-Wasserstein distance:
\[
W_2^2(\mu, \nu) := \inf\limits_{\pi \in \Pi(\mu, \nu)}\biggl\{\int_{\R^d} |x -y|^2 \dd\pi(x,y)\biggr\},\qquad \mu,\nu\in \calP_2(\R^d),
\]
where $\Pi(\mu,\nu)$ denotes the set of all Borel probability measures on $ \R^{2d}$ that have $\mu$ and $\nu$ as first and second marginals respectively, i.e.\
\[
\pi(B \times \R^d) = \mu(B), \qquad \pi(\R^d \times B) = \nu(B) \quad \text{for } B \in \mathcal{B}(\R^{d}).
\]

We emphasize that throughout the article we denote the integral of a function $\varphi \in \calC(\R^d)$ with respect to a probability measure $\mu \colon \R^d \rightarrow [0,1]$ by $\int \varphi(x) \mu(x) \dd x$, even if the probability measure is not absolutely continuous with respect to the Lebesgue measure, and hence does not have an associated density. Moreover, for evaluations of $f \in \calC([0,T],\calP(\R^d \times \R^d))$ at time $t$, position $x$ and velocity $v$ we write $f_t(x,v) := f(t,x,v)$. In particular, we use this for the empirical measure of the dynamics \eqref{eq:xvparticles}, which is given by
$$ \fN_t(x,v) := \sumi \delta(x - x_i(t)) \otimes \delta(v - v_i(t)). $$
It is well-known that $f^N$ satisfies the so-called mean-field equation
\begin{align}\label{eq:evolutionfN}
    &\partial_t \fN + \nabla_x \cdot (v\fN) = \nabla_v \cdot \Big( \int \psi(| y -  x |)(w - v) - \nabla \V(y - x) \dd f^N(t,y,w) \fN \Big)
\end{align}
in the weak sense. To be more precise, we consider the following notion of solution, where 
$\mathcal{C}_c^\infty$ denotes the set of all infinitely differentiable function with compact support.

\begin{definition}\label{def:weaksolution}
	We call $\mu\in \calC([0,T],\calP_2(\R^d\times \R^d))$ a {\em weak measure solution} of \eqref{eq:evolutionfN} with initial condition $\hat \mu \in \calP_2(\R^d\times \R^d)$ if and only if  for any test function $h\in \mathcal{C}_c^\infty([0, T)\times \R^d\times \R^d)$ we have
	\begin{align*}
	&\int_0^T \int_{\R^d} \big( \partial_t h_t  + v\cdot\nabla_x h_t \big)\dd \mu_t\dd t + \int_{\R^d} h_0\dd \hat \mu \\
	&= \int_0^T \int_{\R^d} \Big( \int \psi(| y -  x |)(w - v) - \nabla \V(y - x) \dd \mu_t(y,w) \cdot \nabla_v h\, \dd \mu_t\dd t.
	\end{align*}
\end{definition}
For notational convenience we introduce a short hand notation for the force term, let 
$$ F\ast\mu \colon \R^d \times \R^d \rightarrow \R^d, \quad \big(F\ast\mu\big)(x,v) = -\int \psi(| y -  x |)(w - v) - \nabla \V(y - x) \dd \mu(t,y,w). $$
For the empirical measure $f^N$ it holds 
$\big(F\ast f^N)(x_i,v_i) = F_i(x,v),$ where $F_i$ was already defined in \eqref{eq:xvparticles}. In particular, the assumption on $F_i$ in Assumption~\ref{ass:particle} imply the same properties for $F\ast\mu$ for compactly supported measures  $\mu \in \calP_c(\R^d\times\R^d).$ Moreover, we have the following well-posedness result on the mean-field level.

\begin{proposition}\label{prop:WellPDE}\cite{canizo2011well}
Let Assumption~\ref{ass:particle} hold and $f_0 \in \calP_c(\R^d\times\R^d).$ Then there exists a unique solution $f\in\calC([0,T],\calP_2(\R^d \times \R^d))$ to \eqref{eq:evolutionfN} in the sense of Definition~\ref{def:weaksolution} with initial condition $f_0$. Moreover, the solution remains compactly supported for all $t\in[0,T]$.
\end{proposition}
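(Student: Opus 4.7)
The plan is to invoke the classical strategy of constructing solutions via the method of characteristics combined with a Banach fixed-point argument in the 2-Wasserstein metric, which is the standard route for nonlinear Vlasov-type equations with locally Lipschitz force kernels. More concretely, for any candidate $\mu \in \calC([0,T], \calP_2(\R^d\times\R^d))$ with uniformly compactly supported time slices, I would define the characteristic ODE
\begin{equation*}
\dot X(t) = V(t), \qquad \dot V(t) = (F \ast \mu_t)(X(t), V(t)), \qquad (X(0), V(0)) = (x_0, v_0),
\end{equation*}
and set $\Gamma(\mu)_t := (\Phi_t^\mu)_\# f_0$, where $\Phi_t^\mu$ is the flow map of this ODE. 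A fixed point of $\Gamma$ is precisely a weak measure solution in the sense of Definition~\ref{def:weaksolution}, as can be checked by integrating a test function along characteristics and differentiating in $t$.

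The first technical step is to verify that the characteristic flow $\Phi_t^\mu$ is well defined on $[0,T]$. Since $f_0 \in \calP_c(\R^d\times\R^d)$ and $\mu_t$ is compactly supported on each time slice with a uniform-in-$t$ support bound $R(t)$, the remark following Assumption~\ref{ass:particle} guarantees that $(x,v) \mapsto (F\ast\mu_t)(x,v)$ is locally Lipschitz with at most linear growth. A standard Gronwall estimate applied to $|X(t)|+|V(t)|$ then yields a growth bound depending only on $R(T)$ and the constants from Assumption~\ref{ass:particle}, so the flow exists globally and the support of $\Gamma(\mu)_t$ remains compact with a controllable radius. In particular, restricting to the complete metric subspace
\begin{equation*}
\mathcal{X}_R := \{\mu \in \calC([0,T], \calP_2(\R^d\times\R^d)) : \operatorname{supp}\mu_t \subseteq B_R(0), \ \mu_0 = f_0\}
\end{equation*}
for $R$ chosen large enough, the map $\Gamma$ sends $\mathcal{X}_R$ into itself.

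The central estimate, and also the main obstacle, is the contraction property of $\Gamma$ in the metric $\sup_{t \in [0,T]} W_2(\mu_t, \nu_t)$. This requires two ingredients that I would combine via Gronwall: first, a Lipschitz-type stability of $F\ast\mu$ with respect to $\mu$ in $W_2$, i.e.\ an inequality of the form
\begin{equation*}
|(F\ast\mu)(x,v) - (F\ast\nu)(x,v)| \le C_R \, W_2(\mu,\nu)
\end{equation*}
uniformly on the support, which follows from the Lipschitz regularity of the kernels $\psi(|\cdot|)(\cdot)$ and $\nabla\V$ on compact sets together with the Kantorovich-Rubinstein duality; and second, a Lipschitz dependence of the characteristic flow on the driving measure, again via Gronwall on $|X^\mu(t) - X^\nu(t)| + |V^\mu(t) - V^\nu(t)|$. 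Pushing $f_0$ forward along two such flows and coupling the two transports through the identity coupling of $(\Phi_t^\mu, \Phi_t^\nu)_\# f_0$ gives
\begin{equation*}
W_2^2(\Gamma(\mu)_t, \Gamma(\nu)_t) \le C_R \int_0^t W_2^2(\mu_s, \nu_s)\,\dd s,
\end{equation*}
from which Banach's fixed-point theorem on a short interval yields existence and uniqueness; iterating on $[0,T]$ (using the uniform support bound) gives the global statement, and the compact-support property is inherited directly from the construction. The delicate point I expect to fight with is making the Lipschitz constants uniform on $[0,T]$ despite the potentially growing support, which is why the a priori support estimate needs to be established before the contraction argument, not after.
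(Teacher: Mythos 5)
Your proposal is correct and follows essentially the same route as the proof the paper relies on: the statement is not proved in the paper but cited from \cite{canizo2011well}, whose argument is exactly this construction via the characteristic flow, push-forward of $f_0$, and a fixed-point/contraction estimate in a Wasserstein-type metric on measures with uniformly compact support established a priori. The only cosmetic difference is that the cited work phrases the stability and contraction estimates in the $W_1$ (bounded-Lipschitz/Monge--Kantorovich) distance rather than $W_2$, which changes nothing substantive since the kernels are locally Lipschitz and all measures involved are compactly supported.
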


Moreover, in the limit $N\rightarrow\infty$ we have the well-known convergence of the empirical measure to the solution of the PDE in Wasserstein sense, see for example \cite{golse2016dynamics}.
\begin{proposition}[Dobrushin]\label{prop:Dobrushin}
     Let Assumption~\ref{ass:particle} hold, $\mu$ and $\mu'$ be solutions to the continuity equation \eqref{eq:evolutionfN} for compactly supported initial data $\hat \mu$, $\hat\mu' \in\calP_c(\R^d)$, respectively. Then, there exists a positive constant $c$ such that
 \begin{equation*}
 W_2^2(\mu_t,\mu_t') \le W_2^2(\hat \mu,\hat \mu') e^{c t}\quad \text{for all\, $t\in[0,T]$}.
 \end{equation*}
\end{proposition}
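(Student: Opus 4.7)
The plan is to follow the classical Dobrushin-type argument via characteristics and optimal transport. By Proposition~\ref{prop:WellPDE}, both $\mu_t$ and $\mu_t'$ remain compactly supported on $[0,T]$, so we may fix a common compact set $K\subset\R^d\times\R^d$ containing $\mathrm{supp}\,\mu_t\cup\mathrm{supp}\,\mu_t'$ for every $t\in[0,T]$. On such $K$, Assumption~\ref{ass:particle} together with the local Lipschitz continuity of $\psi$ and $\nabla\V$ yields a Lipschitz constant $L$, uniform in $t$, for the velocity field $b_t^\mu(x,v):=(v,(F\ast\mu_t)(x,v))$, and analogously for $b_t^{\mu'}$. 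The resulting flows $\Phi_t^\mu,\Phi_t^{\mu'}$ are well defined on $K$ and give the pushforward representations $\mu_t=\Phi_t^\mu\#\hat\mu$, $\mu_t'=\Phi_t^{\mu'}\#\hat\mu'$.

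Next, pick an optimal plan $\hat\pi\in\Pi(\hat\mu,\hat\mu')$ realising $W_2^2(\hat\mu,\hat\mu')$ and push it forward jointly to obtain the coupling $\pi_t:=(\Phi_t^\mu,\Phi_t^{\mu'})\#\hat\pi\in\Pi(\mu_t,\mu_t')$. This yields the (non-optimal) bound
\[W_2^2(\mu_t,\mu_t')\le D(t):=\int\bigl|\Phi_t^\mu(z)-\Phi_t^{\mu'}(z')\bigr|^2\,\dd\hat\pi(z,z'),\]
so the claim reduces to proving a Grönwall estimate $\tfrac{\dd}{\dd t}D(t)\le c\,D(t)$. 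Writing $\Phi_t^\mu(z)=(X,V)$, $\Phi_t^{\mu'}(z')=(X',V')$, and setting $e(t;z,z'):=|X-X'|^2+|V-V'|^2$, Young's inequality on the position component gives $\tfrac{\dd}{\dd t}|X-X'|^2\le e(t;z,z')$.

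For the velocity component, decompose the force difference as
\[(F\ast\mu_t)(X,V)-(F\ast\mu_t')(X',V')=\underbrace{(F\ast\mu_t)(X,V)-(F\ast\mu_t)(X',V')}_{(\mathrm{I})}+\underbrace{(F\ast\mu_t)(X',V')-(F\ast\mu_t')(X',V')}_{(\mathrm{II})}.\]
Term $(\mathrm{I})$ is dominated by $L\sqrt{e(t;z,z')}$ via the Lipschitz bound on $F\ast\mu_t$. For $(\mathrm{II})$, rewrite the difference as a single integral against $\pi_t$ (whose marginals are $\mu_t$ and $\mu_t'$) and exploit the Lipschitz constants of $\psi$ and $\nabla\V$ on $K$ to obtain $|(\mathrm{II})|\le L'\int|(y,w)-(y',w')|\,\dd\pi_t\le L'\sqrt{D(t)}$ by Cauchy-Schwarz, using $\int|(y,w)-(y',w')|^2\dd\pi_t=D(t)$ by construction of $\pi_t$.

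Combining these bounds into the ODE for $e(t;z,z')$, integrating over $\hat\pi$ and applying Cauchy-Schwarz once more yields
\[\tfrac{\dd}{\dd t}D(t)\le c_1\,D(t)+c_2\sqrt{D(t)}\int\sqrt{e(t;z,z')}\,\dd\hat\pi\le c\,D(t),\]
and Grönwall concludes. The main obstacle is term $(\mathrm{II})$: it is the only contribution that is nonlocal in the measure argument, and a direct Lipschitz-in-measure estimate for $F\ast(\cdot)$ is unavailable for general $\psi,\nabla\V$. The essential idea is that the flow-transported coupling $\pi_t$ produces precisely $\sqrt{D(t)}$ as an upper bound, closing the Grönwall loop.
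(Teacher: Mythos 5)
Your argument is correct: the representation of both solutions by their characteristic flows, the transport of an optimal initial plan by the pair of flows, and the Gr\"onwall estimate on $D(t)$ (with the coupling $\pi_t$ used to control the nonlocal term (II) by $\sqrt{D(t)}$) is exactly the classical Dobrushin-type coupling proof. The paper does not prove Proposition~\ref{prop:Dobrushin} itself but cites \cite{golse2016dynamics}, where essentially this same argument is carried out, so your proposal coincides with the intended proof.
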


In the following we reformulate the ODE system as port-Hamiltonian system. The port-Hamilto\-nian structure opens the door for an alternative investigation of asymptotic flocking and uniform stability of general alignment-interaction models, which is discussed in the seminal article \cite{carrillo2010asymptotic} for the Cucker-Smale model. Moreover, we show that the port-Hamiltonian structure is preserved while passing to the mean-field limit and characterize Casimir functions of the dynamics.

\section{Port-Hamiltonian formulation }\label{sec:phsFormulation}

In this section, we derive two port-Hamiltonian formulations of the interacting particle system. We emphasize that both port-Hamiltonian formulations preserve the state space dimension. This is in contrast to \cite{Matei} where all relative positions between the particles are considered. First, we discuss the reformulation of the system given in $(x,v)$ coordinates. Then we present a variant that exploits the translational invariance of the systems.

Let us define $z=(x_1,\dots,x_N,v_1,\dots,v_N)$. The Hamiltonian of the system \eqref{eq:xvparticles} is  given by the sum of kinetic and potential energy
\[
\calH^N(z) = \frac12\sumi \Big(  v_i^\top v_i + \frac1N \sumj \calV(x_i - x_j) \Big),\quad z=(x_1,\dots,x_N, v_1,\dots, v_N).
\]
Then we calculate
\begin{align*}
\frac{\partial \calH^N(z)}{\partial z} = \left(-\frac{1}{N} \sumj \nabla \V(x_{j} - x_{1}), \ldots,  -\frac{1}{N} \sumj \nabla \V(x_{j} - x_{N}) ,v\right)^\top
\end{align*}
and obtain
\begin{equation}\label{eq:PHSparticlenew}
\frac{\dd}{\dd t} z = \left(\begin{pmatrix} 0 & I \\ -I & 0 \end{pmatrix}-\begin{pmatrix} 0 & 0 \\ 0 & \Psi(z) \end{pmatrix}\right) \frac{\partial \calH^N(z)}{\partial z}, \quad z(0)=z_0, 
\end{equation}
where
\[  \Psi(z) =\frac{1}{N}\begin{pmatrix} \displaystyle\sum_{j=2}^N \psi(|x_j-x_1|) &-\psi(|x_2-x_1|) & \ldots & -\psi(|x_N-x_1|)\\
-\psi(|x_1-x_2|) & \displaystyle\sum_{j=1,j\ne 2}^N \psi(|x_j-x_2|)& \ddots & \vdots\\
\vdots &\ddots &\ddots&&\\
&&&-\psi(|x_N-x_{N-1}|)\\
-\psi(|x_1-x_N|)&\ldots&-\psi(|x_{N-1}-x_N|)&\displaystyle\sum_{j=1}^{N-1} \psi(|x_j-x_N|)\end{pmatrix}.\]
The state space of the port-Hamiltonian system \eqref{eq:PHSparticlenew} is given by $X:=\R^{Nd+Nd}$.

\begin{remark}\label{rem:Psi}
For later use, we remark that  $\Psi(z)$ is diagonally dominant with non negative diagonal elements, hence positive semi-definite. Further, $\{ v \in \R^{Nd} : v = (\tilde v, \dots, \tilde v) \text{ for some } \tilde v \in \R^d \} $ is contained in the kernel of $\Psi(z)$. Moreover, if  $\psi(x)>0$  for $x\in\R$, then the eigenspace corresponding to the zero eigenvector is  spanned by $\{ v \in \R^{Nd} : v = (\tilde v, \dots, \tilde v) \text{ for some } \tilde v \in \R^d \} $.
\end{remark}
As the positions of the particles appear only relatively in the dynamics, interacting particle systems are translational invariant w.r.t.~$x$. This motivates us to consider the dynamics with center of mass shifted to zero. Let us therefore  define the center of mass by $\bar x = \frac1N \sumj x_j$ and consider $z=(r,v)=(r_1,\dots,r_N,v_1,\dots,v_N)$ with $r_i = x_i - \bar x$ being the position relative to the center of mass. Note that the velocity of the center of mass is conserved. Indeed, using 
$\nabla \mathcal V(-r) = -\nabla \mathcal V(r)$, we find
\begin{align}\label{eqn:meanv}
\sum_{i=1}^N\frac{\dd}{\dd t} v_i &= \frac{1}{N}\sum_{i=1}^N\sumj \psi(|r_j - r_i|)(v_j - v_i) - \frac{1}{N} \sum_{i=1}^N\sumj \nabla \V(r_i - r_j)=0,
\end{align}
Let us denote $\bar v := \frac1N \sumj v_j(0)$. The shifted dynamics is given by
\begin{subequations}\label{eq:phs}
\begin{align}\label{eqn:phs1}
    \frac{\dd}{\dd t} r_i &= v_i - \bar v,  &r_i(0) = (r_0)_i, \\
    \frac{\dd}{\dd t} v_i &= \frac{1}{N}\sumj \psi(| r_i - r_j |)(v_j - v_i) - \frac{1}{N} \sumj \nabla \mathcal V(r_ i - r_j),  &v_i(0)=(v_0)_i.\label{eqn:phs2}
\end{align}
\end{subequations}

We want to emphasize that system \eqref{eq:xvparticles} contains explicit position information of the particles, which we loose in the PHS formulation as we shift by the center of mass. However, pattern formation and uniform stability which are of interest in the context of interacting particle systems are translational invariant, hence the explicit position information plays a minor role. 

The mean velocity $\bar v$ can be incorporated in the Hamiltonian leading to
\[\calH^N(z) = \frac{1}{2}\sumi \Big((v_i-\bar v)^\top (v_i - \bar v) + \frac{1}{N} \sum_{j=1}^{N}  \V(r_i-r_j) \Big)
\]
with partial derivative
\begin{align*}
\frac{\partial \calH^N(z)}{\partial z} = \left(-\frac{1}{N} \sumj \nabla \V(x_{j} - x_{1}), \ldots,  -\frac{1}{N} \sumj \nabla \V(x_{j} - x_{N}) ,v_1-\bar v, \dots, v_N - \bar v\right)^\top.
\end{align*}
For notational convenience, we define $v-\textbf{1}\bar v:=(v_1-\bar v, \dots, v_N - \bar v)^\top$.

To find the port-Hamiltonian structure we note that the upper-left part of the skew-symmetric matrix is predefined by the differential equation for $r$. Indeed, we obtain
\[
\frac{\dd}{\dd t} r = I (v-\textbf{1}\bar v).
\]
Thanks to the structure of $\Psi$, see Remark~\ref{rem:Psi}, it holds
\begin{equation}\label{eqn:null}
\Psi(z)\textbf{1}\bar v = 0, 
\end{equation}
and hence we obtain
\[ 
\frac{\dd}{\dd t} v = -I\frac{\partial \calH^N(z)}{\partial r} -\Psi(z)\frac{\partial \calH^N(z)}{\partial v}
\]
with
\[  \Psi(z) =\frac{1}{N}\begin{pmatrix} \displaystyle\sum_{j=2}^N \!\psi(|r_j-r_1|) &-\psi(|r_2-r_1|) & \ldots & -\psi(|r_N-r_1|)\\
-\psi(|r_1-r_2|) & \displaystyle\sum_{j=1,j\ne 2}^N \!\psi(|r_j-r_2|)& \ddots & \vdots\\
\vdots &\ddots &\ddots&&\\
&&&-\psi(|r_N\!-\!r_{N-1}|)\\
-\psi(|r_1-r_N|)&\ldots&-\psi(|r_{N-1}\!-\!r_N|)&\displaystyle\sum_{j=1}^{N-1} \!\psi(|r_j-r_N|)\end{pmatrix}.\]
Combining the two equations yields the port-Hamiltonian structure
\begin{equation}\label{eq:PHSparticle}
\frac{\dd}{\dd t} z = \left[ \begin{pmatrix} 0 & I \\ -I & 0 \end{pmatrix} - \begin{pmatrix} 0 & 0 \\ 0 & \Psi(z) \end{pmatrix} \right] \frac{\partial \calH^N(z)}{\partial z}. 
\end{equation}
The state space of the port-Hamiltonian system \eqref{eq:PHSparticle} is given by $X:=\R^{Nd+Nd}$.

The main results of the article are based on this formulation. The following theorem discusses the well-posedness and some characteristics of solutions to the port-Hamiltonian system.
\begin{theorem}\label{thm:existencenew}
Let Assumption~\ref{ass:particle}  hold, then for every initial condition $z_0=(r_0,v_0)\in \R^{Nd+Nd}$ the port-Hamiltonian system \eqref{eq:PHSparticle} possesses a unique global solution $z=(r,v)$ satisfying the dissipativity inequality
\begin{equation}\label{eqn:dissipnew}
\frac{\dd}{\dd t} \calH^N(z)= - \langle (v - \textbf{1}\bar v), \Psi(z) (v-\textbf{1}\bar v) \rangle \le 0,
\end{equation}
where 
\begin{equation*}
  \bar v:=\sum_{j=1}^N v_j(0). 
\end{equation*}
Further, the velocity of the center of mass is conserved, that is,
 $ \bar v=\sum_{j=1}^N v_j(t)$, $t\ge 0$. 
\end{theorem}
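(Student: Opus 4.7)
The plan is to settle the three claims in the natural order: first global well-posedness, then conservation of the mean velocity, and finally the dissipation identity, which follows abstractly from the port-Hamiltonian structure once the first two are in place.

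For existence and uniqueness, I would verify that the right-hand side of \eqref{eq:phs}, regarded as a vector field on $\R^{Nd+Nd}$ with the constant $\bar v=\frac{1}{N}\sumj v_j(0)$ read off from the initial data, inherits Assumption~\ref{ass:particle} from the original $(x,v)$-formulation: the alignment and potential terms depend only on the differences $r_i-r_j$, $\psi$ and $\nabla\V$ are unchanged, and the additional constant $-\bar v$ in the drift for $r$ does not affect continuity, local Lipschitz regularity, or the linear growth bound. Hence Proposition~\ref{prop:WellODE}, applied to the shifted field, delivers a unique global $\calC^1$ solution for every initial condition $z_0\in\R^{Nd+Nd}$.

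Next, I would prove that the mean velocity is actually conserved along this solution by summing \eqref{eqn:phs2} over $i$. The alignment double sum $\sum_{i,j}\psi(|r_i-r_j|)(v_j-v_i)$ vanishes because the summand is antisymmetric under the swap $i\leftrightarrow j$, and the potential double sum $\sum_{i,j}\nabla\V(r_i-r_j)$ vanishes by the antisymmetry assumption \eqref{eq:antisymm}; this is exactly the computation already displayed in \eqref{eqn:meanv}. It follows that $\frac{\dd}{\dd t}\sumi v_i(t)\equiv 0$, so $\frac{1}{N}\sumi v_i(t)=\bar v$ for all $t\ge 0$, and in particular the constant $\bar v$ hard-wired in \eqref{eq:phs} coincides with the instantaneous mean velocity.

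For the dissipativity identity I would apply the chain rule along a solution and use the structure \eqref{eq:PHSparticle} to write
\begin{equation*}
\frac{\dd}{\dd t}\calH^N(z)=\left\langle \frac{\partial\calH^N}{\partial z},\left[\begin{pmatrix}0&I\\-I&0\end{pmatrix}-\begin{pmatrix}0&0\\0&\Psi(z)\end{pmatrix}\right]\frac{\partial\calH^N}{\partial z}\right\rangle.
\end{equation*}
The skew-symmetric block contributes zero, and the dissipation term acts only on the lower component of $\partial\calH^N/\partial z$, which is precisely $v-\mathbf{1}\bar v$. This reduces the right-hand side to $-\langle v-\mathbf{1}\bar v,\Psi(z)(v-\mathbf{1}\bar v)\rangle$, non-positive by the positive semi-definiteness of $\Psi(z)$ recorded in Remark~\ref{rem:Psi}. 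I do not anticipate a serious obstacle; the main subtlety is the bookkeeping around $\bar v$ being frozen at the initial value in \eqref{eq:phs} and only a posteriori identified with the time-dependent mean, which is what the second step above guarantees.
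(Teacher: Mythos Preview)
Your proposal is correct and follows essentially the same route as the paper: existence via Proposition~\ref{prop:WellODE}, the energy balance via the chain rule applied to \eqref{eq:PHSparticle} together with the positive semi-definiteness of $\Psi(z)$ from Remark~\ref{rem:Psi}, and conservation of the mean velocity via the antisymmetry computation \eqref{eqn:meanv}. The only cosmetic difference is the order in which you treat the last two items; your choice of establishing $\frac{1}{N}\sumi v_i(t)\equiv\bar v$ first is arguably cleaner, since it resolves upfront the bookkeeping issue you flag about $\bar v$ entering the Hamiltonian as a frozen constant.
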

\begin{proof}
The existence of a unique global solution $z=(r,v)$ follows from Proposition \ref{prop:WellODE}.
The non-negativity of $\psi$ yields $\Psi(z)$ is real, symmetric and diagonally dominant, hence positive semi-definite.
Thus we compute for the solution $z$
\begin{align*}
    \frac{\dd}{\dd t}\calH^N(z) &= \left\langle \frac{\partial \calH^N(z)}{\partial z} , \frac{\dd}{\dd t} z \right\rangle = \left\langle \frac{\partial \calH^N(z)}{\partial z} , \left[ \begin{pmatrix} 0 & I \\ -I^\top & 0 \end{pmatrix} - \begin{pmatrix} 0 & 0 \\ 0 & \Psi(z) \end{pmatrix} \right] \frac{\partial \calH^N(z)}{\partial z}  \right\rangle\\
    &= - \langle v - \textbf{1}\bar v, \Psi(z) v - \textbf{1}\bar v\rangle \le 0.
\end{align*}
This shows the dissipativity inequality. 
Finally, let $z_0=(r_0,v_0)\in \R^{Nd}\times \R^{Nd}$. Then \eqref{eqn:meanv} implies that $\sum_{i=1}^N v_i$ is constant. This completes the proof.
\end{proof}

In the following we assume that Assumption~\ref{ass:particle}  is satisfied, and thus the port-Hamiltonian system \eqref{eq:PHSparticle} has for every initial condition a unique global solution. Next we investigate  conserved quantities of our port-Hamiltonian formulation of interacting particle systems. 

\begin{definition}[Definition 6.4.1 in \cite{schaft2000l2}]
A function $C:X\rightarrow \R$ that is partially differentiable is called a \emph{Casimir function} for the port-Hamiltonian system \eqref{eq:PHSparticle} if
\[  \left(\frac{\partial C}{\partial z}(z)\right)^\top \left[ \begin{pmatrix} 0 & I \\ -I & 0 \end{pmatrix} - \begin{pmatrix} 0 & 0 \\ 0 & \Psi(z) \end{pmatrix} \right]=0, \qquad z\in X.\]
\end{definition}

A Casimir function is a conserved quantity as for solutions $z$ we obtain
\[ \frac{\dd}{\dd t} C(z) = \left(\frac{\partial C}{\partial z}(z)\right)^\top \left[ \begin{pmatrix} 0 & I \\ -I & 0 \end{pmatrix} - \begin{pmatrix} 0 & 0 \\ 0 & \Psi(z) \end{pmatrix} \right]\frac{\partial \calH^N(z)}{\partial z} =0\]
independently of the Hamiltonian $\calH^N$.

\begin{proposition}
A function $C:X\rightarrow \R$ that is partially differentiable is  a Casimir function for the port-Hamiltonian system  \eqref{eq:PHSparticle} if and only if 
$ C(r,v) =  \gamma$
for some $ \gamma \in \R$.
\end{proposition}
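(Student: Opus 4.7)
The plan is to expand the defining condition of a Casimir function as a block-matrix equation and read off constraints on the partial derivatives of $C$ with respect to $r$ and $v$ separately. Writing the gradient in block form as $\partial C/\partial z = (\partial C/\partial r,\partial C/\partial v)^\top$ and multiplying out
\[
\left(\frac{\partial C}{\partial z}(z)\right)^\top \begin{pmatrix} 0 & I \\ -I & -\Psi(z) \end{pmatrix} = 0
\]
yields two row-vector equations: from the first block of columns one obtains $-\partial C/\partial v = 0$, and from the second block of columns one obtains $\partial C/\partial r - (\partial C/\partial v)\,\Psi(z) = 0$.

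The first equation immediately forces $\partial C/\partial v \equiv 0$ on $X$. Substituting this into the second equation collapses the $\Psi(z)$ term entirely and leaves $\partial C/\partial r \equiv 0$ as well. In particular, no spectral information about $\Psi(z)$ is actually needed here; the block structure alone does the work, which is what makes the proposition crisp.

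With both partial derivatives of $C$ vanishing on $X = \R^{Nd+Nd}$, and since this state space is connected, $C$ is constant, i.e.\ $C(r,v) = \gamma$ for some $\gamma \in \R$. The converse is immediate: if $C$ is constant, then $\partial C/\partial z \equiv 0$ and the Casimir identity holds trivially. I do not anticipate a real obstacle; the only subtle point is to be careful that $\partial C/\partial v = 0$ is extracted first, since otherwise one might be tempted to use the kernel of $\Psi(z)$ described in Remark~\ref{rem:Psi} and arrive at a weaker conclusion. The argument shows that the minimal port-Hamiltonian formulation \eqref{eq:PHSparticle} admits no nontrivial Casimir functions, which is consistent with the fact that the translational degree of freedom has already been quotiented out by shifting to the center of mass.
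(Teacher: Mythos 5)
Your proof is correct, and it takes a slightly more self-contained route than the paper's. The paper invokes the general result from van der Schaft that for a system with structure $J-R$ the Casimir condition splits into the two separate conditions $\bigl(\partial C/\partial z\bigr)^\top J=0$ and $\bigl(\partial C/\partial z\bigr)^\top R=0$; that splitting lemma implicitly uses skew-symmetry of $J$ together with positive semi-definiteness of $R$ (here $\Psi(z)\succeq 0$), and only afterwards reads off $\partial C/\partial r=0$, $\partial C/\partial v=0$. You instead multiply out the combined block matrix $\left(\begin{smallmatrix}0 & I\\ -I & -\Psi(z)\end{smallmatrix}\right)$ directly: the first column block forces $\partial C/\partial v=0$, which then eliminates the $\Psi(z)$ term in the second column block and yields $\partial C/\partial r=0$. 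As you point out, this needs no semi-definiteness or kernel information about $\Psi(z)$ at all (in particular none of Remark~\ref{rem:Psi}), whereas the paper's route goes through a cited general theorem whose proof does use such structure; the trade-off is that the paper's argument generalizes immediately to any port-Hamiltonian system of the form $(J-R)\partial\calH/\partial z$, while yours exploits the specific block pattern of \eqref{eq:PHSparticle}. Your explicit mention of connectedness of $X=\R^{Nd+Nd}$ to pass from vanishing gradient to constancy, and of the trivial converse direction, fills in steps the paper leaves implicit. Both arguments arrive at the same conclusion that \eqref{eq:PHSparticle} admits only constant Casimir functions.
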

\begin{proof}
In \cite{schaft2000l2} it is shown that a function $C:X\rightarrow \R$ that is partially differentiable is  a Casimir function for the port-Hamiltonian system \eqref{eq:PHSparticle} if and only if 
\[  \left(\frac{\partial C}{\partial z}(z)\right)^\top  \begin{pmatrix} 0 & I \\ -I & 0 \end{pmatrix} =0 \quad \text{and} \quad \left(\frac{\partial C}{\partial z}(z)\right)^\top   \begin{pmatrix} 0 & 0 \\ 0 & \Psi(z) \end{pmatrix}=0, \qquad z\in X.\]
This is equivalent to
\[   \frac{\partial C}{\partial r}(z)  =0 \quad \text{and} \quad  \frac{\partial C}{\partial v}(z)=0, \qquad z\in X,\]
 and thus the statement of the proposition follows.
\end{proof}

\begin{remark}
Note that the system matrices of the formulations in $(x,v)$ and $(r,v)$ coincide. This allows to conclude that the Casimir functions of the different formulations coincide as well.
\end{remark}

\begin{remark}
We want to emphasize that, in contrast to \cite{Matei}, we do not require a null space condition in order to define the Casimir function in neither of the two formulations. This is due to the different choice of the port-Hamiltonian formulation.
\end{remark}

We conclude this section with a sufficient condition for asymptotic stability. Here the function $\mathcal H^N$ serves as candidate for a suitable Lyapunov function. The following lemma will be useful.

\begin{lemma}\label{lem:invariantnew}
Let Assumption~\ref{ass:particle} hold, $\psi(x)>0$  for $x\in\R$ and we define 
 \[ \mathcal N:= \{z=(r,v)\in X\mid  - \langle v - \mathbf{1}\bar v, \Psi(z)( v - \mathbf{1}\bar v )\rangle = 0\}. \]
 Let $z:[0,\infty)\rightarrow X$, $z=(r,v)$, be a solution of \eqref{eq:phs} with $z(t)\in \mathcal N$ for $t\ge 0$. Then the function $v$ is constant,
 \[  v_i(t)=\frac{1}{N}\sumj v_j(0)\quad\text{and}\quad \sumj \nabla \mathcal V(r_ i(t) - r_j(t))=0, \qquad i=1,\ldots, N, \, t\ge 0.\]
\end{lemma}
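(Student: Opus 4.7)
The plan is to exploit the positive semi-definiteness of $\Psi(z)$ together with the sharp kernel characterization in Remark~\ref{rem:Psi}. Since $\Psi(z)$ is real, symmetric and positive semi-definite, the assumption $-\langle v - \mathbf{1}\bar v, \Psi(z)(v - \mathbf{1}\bar v)\rangle = 0$ for all $t \ge 0$ is equivalent to $\Psi(z(t))(v(t) - \mathbf{1}\bar v) = 0$, i.e.\ $v(t) - \mathbf{1}\bar v \in \ker \Psi(z(t))$ for every $t$. Under the hypothesis $\psi(x) > 0$ for all $x \in \R$, Remark~\ref{rem:Psi} tells us that this kernel equals $\{(\tilde v,\dots,\tilde v) : \tilde v \in \R^d\}$ exactly, so there exists $c(t) \in \R^d$ such that $v_i(t) = \bar v + c(t)$ for every $i = 1, \dots, N$.

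Next, I would use the conservation of the center of mass velocity established via \eqref{eqn:meanv} and Theorem~\ref{thm:existencenew}: averaging over $i$ gives $\frac{1}{N}\sumi v_i(t) = \bar v$, while averaging $v_i(t) = \bar v + c(t)$ over $i$ also yields $\bar v + c(t)$. Comparing these two expressions forces $c(t) \equiv 0$, so $v_i(t) = \bar v = \tfrac{1}{N}\sumj v_j(0)$ for every $i$ and every $t \ge 0$. This settles the first conclusion of the lemma.

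Finally, I would plug this information back into the dynamics \eqref{eqn:phs2}. Since $v_i(t)$ is constant in $t$, we have $\tfrac{\dd}{\dd t} v_i(t) = 0$. On the right-hand side, the alignment contribution $\tfrac{1}{N}\sumj \psi(|r_i - r_j|)(v_j - v_i)$ vanishes identically because $v_j(t) = v_i(t)$ for all $i,j$. What remains is
\[
0 = -\frac{1}{N}\sumj \nabla \V(r_i(t) - r_j(t)), \qquad i = 1,\dots,N,\ t \ge 0,
\]
which is precisely the second claim.

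I do not expect any serious obstacle: the argument is essentially a kernel chase plus substitution. The only point that requires care is the sharp characterization of $\ker \Psi(z)$, which is why the hypothesis $\psi > 0$ is crucial -- without it, $\Psi(z)$ could have additional null directions and the conclusion $v_i(t) = \bar v$ for every $i$ would not follow.
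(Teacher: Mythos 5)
Your proof is correct and follows essentially the same route as the paper: conclude from $\psi>0$ that all velocities coincide (you make this explicit via the kernel characterization of $\Psi(z)$ from Remark~\ref{rem:Psi}, the paper states it directly from the quadratic form), then invoke conservation of the mean velocity from Theorem~\ref{thm:existencenew} to pin $v_i(t)=\frac1N\sumj v_j(0)$, and finally substitute into \eqref{eqn:phs2} to kill the potential term. No gaps; your version merely spells out the semi-definiteness and kernel steps in more detail.
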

\begin{proof}
 As $\psi(x)>0$  for $x\in\R$,  we obtain $ v_i(t)=v_j(t)$ for  $i,j=1, \ldots, N$ and $t\ge 0$. This together with Theorem \ref{thm:existencenew} implies that $v$ is constant and satisfies $v_i(t)=\frac{1}{N}\sumj v_j(0)$ for $ i=1,\ldots, N$, and $t\ge 0$. The remaining statement follows directly from equation \eqref{eq:phs}. 
\end{proof}

\begin{theorem}\label{prop:stabil}
Let Assumption~\ref{ass:particle} hold, $\psi(x)>0$  for $x\in\R$ and $\nabla \mathcal V$ bounded.
Then  for every initial condition $ z_0=(r_0,v_0)\in \R^{Nd}\times \R^{Nd}$ the corresponding solution $z$ of \eqref{eq:PHSparticle} satisfies
\[ \lim_{t\rightarrow \infty} \dist(z(t), L) =0, \]
where 
\[ L:= \Big\{ (\tilde r,\tilde v)\in \R^{Nd}\times \R^{Nd}\mid \tilde v_i=\frac{1}{N}\sumj v_j(0),\,\, \sumj \nabla \mathcal V(\tilde r_ i - \tilde r_j)=0, \, i=1,\ldots, N\Big\}. \]
\end{theorem}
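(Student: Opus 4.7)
The plan is to apply the classical LaSalle invariance principle, using $\calH^N$ as the Lyapunov function and combining the dissipativity inequality of Theorem~\ref{thm:existencenew} with the structural characterization of solutions that keep $\frac{\dd}{\dd t}\calH^N$ equal to zero provided by Lemma~\ref{lem:invariantnew}.

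First I would observe that Theorem~\ref{thm:existencenew} already delivers the two ingredients needed to set up a LaSalle argument: the Hamiltonian $\calH^N$ is non-increasing along any trajectory $z=(r,v)$ of \eqref{eq:PHSparticle}, with derivative $-\langle v-\mathbf 1\bar v,\Psi(z)(v-\mathbf 1\bar v)\rangle\le 0$, and the velocity of the center of mass is conserved so that $\bar v$ is fixed by the initial data. Together with the fact that $\calV$ is bounded below (the natural standing assumption on the potential in this setting), the monotonicity of $\calH^N$ yields a uniform-in-$t$ bound on the kinetic part $\tfrac12\sum_i|v_i-\bar v|^2$, and hence on $v(t)$ itself; boundedness of $\nabla\calV$ then bounds the right-hand side of \eqref{eq:phs}, producing a uniform Lipschitz estimate on $z$.

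Next I would assemble the LaSalle argument. Assuming the trajectory $\{z(t):t\ge 0\}$ is precompact in $\R^{Nd}\times\R^{Nd}$, its $\omega$-limit set $\omega(z_0)$ is non-empty, compact and invariant under the flow; and since $\calH^N$ is constant on $\omega(z_0)$, its time derivative vanishes there, so $\omega(z_0)\subset\mathcal N$, where $\mathcal N$ is as in Lemma~\ref{lem:invariantnew}. Because $\psi>0$, Lemma~\ref{lem:invariantnew} applies to the entire invariant set and forces any solution staying inside $\mathcal N$ to satisfy $v_i\equiv\bar v$ and $\sum_j\nabla\calV(r_i-r_j)=0$ for all $i$, which is exactly the defining condition of $L$. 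Invariance of $\omega(z_0)$ therefore gives $\omega(z_0)\subset L$, and this is equivalent to $\dist(z(t),L)\to 0$ as $t\to\infty$.

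The main obstacle is the precompactness of the trajectory, and specifically the boundedness of the relative positions $r_i(t)$. The assumption that $\nabla\calV$ is bounded controls only the accelerations and forces $\calV$ to grow at most linearly, while conservation of the center of mass only gives $\sum_i r_i=0$, which does not by itself confine the individual $r_i$. Handling this cleanly likely requires either a coercivity-type condition on $\calV$ (so that sublevel sets of $\calH^N$ are bounded in $r$), or a more refined argument exploiting the specific structure of the interaction, for instance showing that the dissipation integral $\int_0^\infty\langle v-\mathbf 1\bar v,\Psi(z)(v-\mathbf 1\bar v)\rangle\,\dd t$ is finite and combining it with uniform continuity of $t\mapsto v(t)-\mathbf 1\bar v$ to bound the excursions of $r$. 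Once precompactness is secured, the rest of the argument is a direct consequence of LaSalle together with Lemma~\ref{lem:invariantnew}.
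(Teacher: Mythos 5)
Your overall strategy (LaSalle with $\calH^N$ as Lyapunov function, dissipativity from Theorem~\ref{thm:existencenew}, identification of the invariant set inside $\mathcal N$ via Lemma~\ref{lem:invariantnew}) is exactly the skeleton of the paper's proof, but the step you yourself flag as ``the main obstacle'' --- precompactness of the trajectory, in particular boundedness of $r(t)$ --- is precisely the content of the paper's argument, and your proposal does not supply it. The paper closes this gap without any coercivity or lower bound on $\calV$: writing $\|v(t)-\mathbf 1\bar v\|^2$ as its initial value plus the integral of $2(v-\mathbf 1\bar v)\cdot\frac{\dd}{\dd s}(v-\mathbf 1\bar v)$, using $\Psi(z)\mathbf 1\bar v=0$, the boundedness of $\nabla\calV$ together with the Peter--Paul inequality, and the fact that the second smallest eigenvalue $\lambda_2$ of $\Psi(z)$ is positive (strict diagonal dominance of the relevant submatrix, which is where $\psi>0$ enters quantitatively), one gets via Gronwall an exponential decay bound
\begin{equation*}
\|v(t)-\mathbf 1\bar v\|^2\le \Bigl(\|v(0)-\mathbf 1\bar v\|^2+\tfrac{t}{\varepsilon}\|\nabla\calV\|_\infty^2\Bigr)e^{-(2\lambda_2-\varepsilon)t},\qquad 0<\varepsilon<2\lambda_2 .
\end{equation*}
This gives not only boundedness but \emph{integrability in time} of $\|v(t)-\mathbf 1\bar v\|$, and then $\|r(t)\|\le\|r(0)\|+\int_0^t\|v(s)-\mathbf 1\bar v\|\,\dd s$ is uniformly bounded, yielding the compact set needed for LaSalle.

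Two further points. First, your route to velocity boundedness assumes $\calV$ is bounded from below; that is not among the hypotheses (only $\nabla\calV$ bounded and \eqref{eq:antisymm} are assumed, which permit $\calV$ to decrease linearly at infinity), and it is also unnecessary once the Gronwall estimate above is in place. Second, your suggested fallback --- finiteness of the dissipation integral $\int_0^\infty\langle v-\mathbf 1\bar v,\Psi(z)(v-\mathbf 1\bar v)\rangle\,\dd t$ plus uniform continuity --- would at best give $\|v(t)-\mathbf 1\bar v\|\to0$ (a Barbalat-type conclusion), which controls the accelerations but not $\int_0^\infty\|v(s)-\mathbf 1\bar v\|\,\dd s$, so it cannot by itself confine $r(t)$; an explicit decay rate, as in the paper, is what makes the position bound work. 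So the proposal is not wrong in outline, but it is missing the key quantitative estimate and would need either the paper's Gronwall argument or an added assumption that the theorem does not make.
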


\begin{proof}
Our goal is to apply LaSalle's stability theorem \cite[Theorem 3.2.11]{HiPr},  hence we have to show that the trajectories are contained in a compact subset of $\R^{Nd}\times \R^{Nd}$. 

Let $\bar v$ the mean velocity as above. We estimate
\begin{align*}
\| v(t) - \textbf{1}\bar v \|^2 &= \| v(0) - \textbf{1} \bar v\|^2 + \int_0^t 2 (v- \textbf{1}\bar v) \cdot \frac{d}{ds} (v- \textbf{1}\bar v) \dd s \\
&= \| v(0) - \textbf{1} \bar v\|^2 + \int_0^t 2 (v - \textbf{1}\bar v) (-\frac{\partial}{\partial r} \mathcal H^N(z) - \Psi(z) (v - \textbf{1}\bar v)) \dd s,
\end{align*}
where we used $\frac{d}{dt} \bar v = 0$ and $\Psi(z)\textbf{1}\bar v =0$. The boundedness of $\nabla \mathcal V$ and the Peter-Paul inequality allow us to estimate for $\varepsilon>0$
\begin{align*}
\| v(t) - \textbf{1}\bar v \|^2 &\le \| v(0) - \textbf{1} \bar v\|^2 + \int_0^t \frac{1}{\varepsilon} \| \nabla \mathcal V\|_\infty^2 +  \varepsilon \| v - \textbf{1}\bar v \|^2 - 2(v - \textbf{1}\bar v) \Psi(z)(v - \textbf{1}\bar v) \dd s \\
&\le  \| v(0) - \textbf{1} \bar v\|^2 +  \frac{t}{\varepsilon}  \| \nabla \mathcal V\|_\infty^2 - (2\lambda_2 - \varepsilon) \int_0^t \| v - \textbf{1}\bar v\|^2 \dd s,
\end{align*}
where we denote by $\lambda_2$ the second smallest eigenvalue of $\Psi(z)$. Note that $\lambda_2 >0$ since the $(n-1) \times (n-1)$ submatrix $\big(\Psi(z)_{ij}\big)_{i=1,\dots, n-1;j=1,\dots,n-1}$ is strictly diagonally dominant and therefore positive definite.

We define 
$$\alpha(t):= \| v(0) - \textbf{1} \bar v\|^2 +  \frac{t}{\varepsilon}   \| \nabla \mathcal V\|_\infty^2,$$
and then  Gronwall's inequality yields 
\[ \| v(t) - \textbf{1}\bar v \|^2  \le \alpha(t) e^{- (2\lambda_2 - \varepsilon)t}.\] 
Choosing $\varepsilon$ such that $2\lambda_2 > \varepsilon > 0$, we find that $v(t)$ relaxed towards $\textbf{1}\bar v$. In particular, $$\sup \{\| v(t) - \textbf{1}\bar v\|^2\mid t\ge 0\}<\infty,$$
which shows the boundedness of the solution trajectory w.r.t.~the velocity variable. 

We are left to show the boundedness of the trajectory w.r.t.~the positions. We estimate
\begin{align*}
\| r(t) \| \le \| r(0) \| + \int_0^t \|v(s) - \textbf{1}\bar v \| \dd s 
\le \| r(0) \| +  \int_0^\infty \alpha(s) e^{- (2\lambda_2 - \varepsilon)s}\dd s. 
\end{align*}
As the integral on the right hand side converges, we 
 conclude that also the position variables of the trajectories are bounded uniformly for all $t\ge 0$. Altogether, for any initial data the trajectories are contained in a compact set for all times. The statement follows by LaSalle's stability theorem.
\end{proof}

\begin{example}
Let us consider the example of the Cucker-Smale dynamics with potential, where the alignment function \cite{carrillo2010particle}  
\[
\psi(|r_j - r_i|) = \frac{K}{(\delta^2 + |r_j - r_i|^2)^\beta}, \qquad K,\delta >0 \text{ and }\beta \ge 0
\]
and (regularized) Morse potential \cite{Dorsogna2006self}
\[
\calV(d) = R e^{-|d|^2/r} - A e^{-|d|^2/a}, \qquad R,A \ge 0 \text{ and } r,a >0.
\]
are explicitly given. Note that $\psi \colon \R_{\ge 0} \rightarrow \R_{>0}$ is strictly positive and suppose that $r>a$ and $A>0$. 
 Then $\calH^N$ satisfies the assumption of  Theorem \ref{prop:stabil}. Thus  the Hamiltonian decreases as long as the velocities of the swarm members are not aligned. In particular, this yields unconditional flocking. Comparing our result to \cite{carrillo2010asymptotic} we find that the PHS structure allows us to boil the argument for flocking down to an application of LaSalle's stability theorem. However, we note that we require $\V\ne 0$ in the proof. Instead the argument in  \cite{carrillo2010asymptotic} exploits structure of the support of the solution to the particle system and holds for $\V \equiv 0$. 
 \end{example}

\begin{remark}
 Also the Kuramoto model with inertia and fully connected incidence matrix \cite{tanaka1997first}  fits into the setting with $\V(r_i - r_j) = -\cos(r_i -r_j)$ leading to
\begin{subequations} \label{eq:xvsimple}
\begin{align}
    \frac{\dd}{\dd t} r_i &= v_i-\bar v, \qquad i=1,\dots,N\\
    \frac{\dd}{\dd t} v_i &= -\gamma  (v_i-\bar v) + \frac{1}{N} \sumj \sin(r_i - r_j)\\ 
    r_i(0) &= \hat r_i, \qquad v_i(0) = \hat v_i.
\end{align}
\end{subequations}
with friction parameter $\gamma>0$, can be formulated as port-Hamiltonian system.  We obtain the  Hamiltonian 
\[
\calH^N(z) = \frac12\sumi \Big(  (v_i-\bar v)^\top (v_i - \bar v) - \frac1N \sumj \cos(r_i - r_j) \Big),\quad z=(r_1,\dots,r_N, v_1,\dots, v_N)
\]
and the dynamics
\[
\frac{\dd}{\dd t} z = (J-R) \frac{\partial \calH^N(z)}{\partial z}, \; z(0)=z_0, \quad J = \begin{pmatrix} 0 & I  \\ -I& 0 \end{pmatrix}, \; R = \begin{pmatrix} 0 & 0 \\ 0 & \gamma I \end{pmatrix}.
\]
As $R$ has higher rank compared to the one discussed above, the stability result with equilibrium point $v_i = 0$ is easier to show for this particular case of the Kuramoto model.
\end{remark}

\section{Mean-field limit}\label{sec:mean_field}
We obtain a candidate for the mean-field equation of the shifted dynamics following the standard derivation and moreover the mean-field Hamiltonian by rescaling with $\frac{1}{N}$. In fact, using the empirical measure $$f^N(t,r,v) = \frac{1}{N} \sumi \delta(r - r_i(t)) \otimes \delta(v-v_i(t)),$$ 
were $(r_i,v_i)_{i=1}^N$ denotes the solution of \eqref{eq:phs}, we obtain the PDE describing the mean-field dynamics given by
\begin{equation}\label{eq:evoFN}
\partial_t f^N + \nabla_r \cdot \Big( \big(v- \bar v  \big) f^N \Big) = \nabla_v \cdot \Big(\big( \int \psi(r- \hat r)(v- \hat v) + \nabla \calV(r - \hat r) \dd f^N(\hat r, \hat v) \big) f^N \Big),
\end{equation}
where $\bar v = \int \hat v \dd f^N(0, \hat r, \hat v)$. 
Moreover, for the mean-field Hamiltonian we obtain
\[\lim\limits_{N\rightarrow \infty} \frac{1}{N} \calH^N(z)  = \lim\limits_{N\rightarrow \infty} \int_{\mathbb R^d\times \mathbb R^d}\int_{\R^d \times \R^d} \frac{1}{2} \left(   (v-\bar v)^\top (v - \bar v) + \V(r-\hat r)\right) \dd f^N(t,\hat r,\hat v) \dd f^N(t,r,v).\]
This motivates to define the Hamiltonian for the mean-field equation as 
\begin{align*}\calH(f)  &=  \int_{\mathbb R^d\times \mathbb R^d}\int_{\R^d \times \R^d} \frac{1}{2} \left(    (v - \bar v)^\top (v - \bar v)  + \V(r-\hat r)\right) \dd f(t,\hat r,\hat v) \dd f(t,r,v) \\
&= \frac{1}{2}\int_{\mathbb R^d\times \mathbb R^d} f\ast \big(v^\top v + \mathcal V(r)\big) \dd f(t,r,v),
\end{align*}
where $\bar v = \int \hat v \dd f(t, \hat r, \hat v)$.

To check if the Hamiltonian structure is preserved in the mean-field limit we compute the variation of $\calH(f)$ in the space of probability measures, see also \cite{burger2021mean}. In order to preserve the normalization of the measures, we consider the push forward of $f$ w.r.t.~$(\text{id}+ \varepsilon \zeta)$ for $\zeta \in \calC_c(\mathbb R^{2d},\mathbb R^{2d})$, $\zeta(r,v)=(\zeta_r(r,v),\zeta_v(r,v)) $ and $\varepsilon > 0$ to find
\begin{align*}
&\calH((\text{id} +\varepsilon \zeta)_\#f) -\calH(f) \\
&
=\frac{1}{2} \int_{\mathbb R^d\times \mathbb R^d}\int_{\R^d \times \R^d}    \left(v+\varepsilon \zeta_v(r,v) - \bar v\right)^\top \left(v+\varepsilon \zeta_v(r,v) - \bar v\right)\dd f(t,\hat r,\hat v) \dd f(t,r,v)\\
&\quad +\frac{1}{2}\int_{\mathbb R^d\times \mathbb R^d}\int_{\R^d \times \R^d}  \V((r+\varepsilon \zeta_r(r,v))-\hat r-\varepsilon \zeta_{r}(\hat{r},\hat{v}))) \dd f(t,\hat r,\hat v) \dd f(t,r,v)\\
&\quad-\frac{1}{2}\int_{\mathbb R^d\times \mathbb R^d}\int_{\R^d \times \R^d} \left(   (v - \bar v)^\top (v - \bar v) + \V(r-\hat r)\right) \dd f(t,\hat r,\hat v) \dd f(t,r,v)
\\
&= \varepsilon \int_{\mathbb R^d\times \mathbb R^d}\int_{\R^d \times \R^d}  (v - \bar v)^\top \zeta_v(r,v) +\nabla \V(r-\hat r) \zeta_r(r,v) \, \dd f(t,\hat r,\hat v) \dd f(t,r,v) + o(\varepsilon),
\end{align*}
where we used \eqref{eq:antisymm} and Fubini's Theorem.
In the limit $\varepsilon\rightarrow 0$ we obtain 
\begin{equation*}
  \lim_{\varepsilon\rightarrow 0}  \frac{1}{\varepsilon} \Big(\calH((\text{id} +\varepsilon \zeta)_\#f) -\calH(f) \Big) = \int_{\mathbb R^d\times \mathbb R^d}\int_{\R^d \times \R^d} \begin{pmatrix}  \nabla \V(r-\hat r) \\ v - \bar v \end{pmatrix} \cdot \zeta(r,v) \dd f(t,\hat r,\hat v) \dd f(t,r,v).
\end{equation*}
Following \cite{ambrosio2005gradient} we can identify
\begin{equation}
\nabla_f \calH(f) = \begin{pmatrix}  
\left(\int_{\R^d \times \R^d} \nabla \V(r-\hat r) \, \dd f(t,\hat r,\hat v)\right)  \\  v-\bar v \end{pmatrix} = \begin{pmatrix}  
\left(\int_{\R^d \times \R^d} \nabla \V(r-\hat r) \, \dd f(t,\hat r,\hat v)\right)  \\ \int_{\R^d \times \R^d} (v-\hat v) \; \dd f(t,\hat r,\hat v) \end{pmatrix} = f\ast \begin{pmatrix} \nabla \V(r) \\ v \end{pmatrix}.
\end{equation}
We can rewrite the mean-field equation as
\begin{equation}\label{eq:meanfieldPHS}
 \partial_t f = -\nabla_{(r,v)} \cdot \left(f \left( f \ast \left(\left(\begin{pmatrix} 0 & I \\ -I & 0 \end{pmatrix} -\begin{pmatrix} 0 & 0 \\ 0 & \psi(r) \end{pmatrix}\right) \begin{pmatrix} \nabla \V(r) \\ v \end{pmatrix} \right) \right) \right).
\end{equation}

As expected we obtain the dissipativity inequality also on the mean-field level.
\begin{theorem}\label{thm:dissipativityF}
Let $\psi(x) > 0$ for all $x\in \R$ and  Assumption~\ref{ass:particle} hold. Then the dynamics \eqref{eq:meanfieldPHS} admits for every initial condition $f_0\in \calP_c(\R^d \times \R^d)$ a unique solution $f$ in the sense of Definition~\ref{def:weaksolution}. Moreover, the solution is dissipative, i.e.~it holds
\[
\frac{\dd}{\dd t} \calH(f) \le 0.
\]
Further, for every initial condition $f_0 \in \calP_c(\R^d \times \R^d)$ there exists a vector $\bar v \in \R^d$ such that 
\[
\int v\, \dd f(t,r,v) = \bar v \quad \text{for }t\ge 0.
\]
\end{theorem}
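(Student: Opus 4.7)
The plan is to establish the three assertions—existence/uniqueness, conservation of the mean velocity, and the dissipativity inequality—separately, in that order, leveraging Proposition~\ref{prop:WellPDE} for the first claim and the variational computation of $\nabla_f\calH$ carried out just before the theorem statement for the third claim.

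For existence and uniqueness, I would observe that \eqref{eq:meanfieldPHS} coincides with \eqref{eq:evolutionfN} up to the additive constant shift $\bar v$ in the transport term in $r$. Since constant shifts in the velocity field preserve continuity, sublinear growth and local Lipschitz-continuity, Assumption~\ref{ass:particle} transfers unchanged to the force $F\ast f$ relevant for the shifted dynamics, so Proposition~\ref{prop:WellPDE} yields a unique weak measure solution $f\in\calC([0,T],\calP_2(\R^d\times\R^d))$ that remains compactly supported for all $t$. In particular $\bar v = \int \hat v\, df_0$ is well-defined.

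For conservation of the mean velocity I would insert into the weak formulation (Definition~\ref{def:weaksolution}, adapted to \eqref{eq:meanfieldPHS}) a test function of the form $h(t,r,v)=\varphi(t)v_k\chi_R(r,v)$, where $\chi_R$ is a smooth cutoff equal to one on $\mathrm{supp}(f_t)$ for all $t\in[0,T]$; this is possible since the support is compact and propagates continuously. The transport term in $r$ contributes nothing, while the force term produces $-\varphi(t)\int\!\!\int [\psi(r-\hat r)(v-\hat v)+\nabla\calV(r-\hat r)]_k \, df_t(\hat r,\hat v)\, df_t(r,v)$. Relabeling $(r,v)\leftrightarrow(\hat r,\hat v)$ and using $\psi(r-\hat r)=\psi(\hat r-r)$ together with the antisymmetry \eqref{eq:antisymm} shows that this double integral vanishes, so $\tfrac{d}{dt}\int v\, df_t=0$ in the distributional sense and hence pointwise by continuity. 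In particular $\bar v=\int \hat v\, df_t(\hat r,\hat v)$ for every $t\ge 0$, so the definition of $\calH$ is consistent with a conserved $\bar v$.

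For the dissipativity inequality I would invoke the Wasserstein chain rule: for any absolutely continuous curve $f_t$ in $(\calP_2,W_2)$ with continuity equation velocity field $\mathbf{v}_t$, one has $\tfrac{d}{dt}\calH(f_t)=\int\nabla_f\calH(f_t)\cdot\mathbf{v}_t\, df_t$. Reading off $\mathbf{v}_t$ from \eqref{eq:meanfieldPHS} and using the port-Hamiltonian block structure, the contribution of the skew-symmetric block cancels exactly against itself, leaving
\[
\frac{d}{dt}\calH(f_t) = -\int\!\!\int \psi(r-\hat r)(v-\bar v)^\top(v-\hat v)\, df_t(\hat r,\hat v)\, df_t(r,v).
\]
Symmetrizing under $(r,v)\leftrightarrow(\hat r,\hat v)$, as in the particle-level calculation for Theorem~\ref{thm:existencenew}, converts this into $-\tfrac{1}{2}\int\!\!\int \psi(r-\hat r)|v-\hat v|^2\, df_t\, df_t\le 0$ since $\psi\ge 0$. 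The main obstacle is justifying the chain rule along a weak measure solution that is only known to be continuous in $W_2$; I would handle this by exploiting compact support of $f_t$ (so that the integrand $\tfrac12|v-\bar v|^2+\tfrac12\int\calV(r-\hat r)\, df_t(\hat r,\hat v)$ is smooth and bounded on a neighborhood of the support) and either invoking the chain rule from Ambrosio--Gigli--Savaré, or, equivalently, testing the weak formulation directly against a compactly supported, time-dependent regularization of this function and passing to the limit to obtain the same symmetrized dissipation identity.
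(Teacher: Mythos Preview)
Your proposal is correct and follows essentially the same route as the paper: well-posedness by reduction to Proposition~\ref{prop:WellPDE}, conservation of $\int v\,df_t$ via the symmetry of $\psi$ and the antisymmetry \eqref{eq:antisymm}, and dissipativity by computing $\tfrac{d}{dt}\calH(f_t)$ and symmetrizing to obtain $-\tfrac12\iint \psi(|r-\hat r|)|v-\hat v|^2\,df_t\,df_t\le 0$. The paper does these same three steps (in a slightly different order) but writes them as formal computations---integration by parts and the product rule---whereas you take more care to embed the argument in the weak formulation with compactly supported test functions and to invoke the Wasserstein chain rule explicitly; this extra rigor is appropriate and does not change the underlying strategy.
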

\begin{proof} 

We prove the last statement first. Indeed, using integration by parts we calculate 
\begin{align*}
\frac{\dd}{\dd t} \int v\, \dd f (t,r,v) 
&= \int v \, \nabla_v \cdot \Big( \int \psi(| r -  \hat r |)(v - \hat v) - \nabla \V(r - \hat r) \dd f(t,\hat r,\hat v) f \Big) \dd r \dd v \\
&= \iint \psi(|r - \hat r|) (v - \hat v) - \nabla \calV(r-\hat r) \dd f(t,\hat r, \hat v) \dd f(t,r,v) \\
&= -\frac{1}{2} \iint \nabla \calV(r-\hat r) \dd f(t,\hat r, \hat v) \dd f(t,r,v) + \frac{1}{2} \iint \nabla \calV(r- \hat r) \dd f(t,\hat r, \hat v) \dd f(t,r,v) \\
&= 0,
\end{align*}
which proves that the first moment with respect to the velocity is preserved. The well-posedness of \eqref{eq:meanfieldPHS} is obtained  with the same arguments as \eqref{eq:evolutionfN} in Proposition~\ref{prop:WellPDE}. 

For the dissipativity we use the product rule, the symmetry of $\psi$ and the anti-symmetry of $\nabla \calV$  to obtain  
\begin{equation*}
    \frac{\dd}{\dd t} \calH(f) 
    = -\frac12 \iint (v - \hat v) \cdot  ( \psi(|r - \hat r|)(v - \hat v)) \dd f(t,\hat r,\hat v) \dd f(t,r,v) 
    \le 0.\qedhere
\end{equation*}
\end{proof}

Next we investigate the characteristics and their port-Hamiltonian formulation. The characteristics $Z_t(z) = (R_t(r),V_t(v))$ read
\begin{subequations}\label{eq:PHSmf}
\begin{align}
\frac{\dd}{\dd t} R_t(r) &= V_t(v) - \int V_t(v) \dd f_0(r,v), \qquad \text{law}(z) = f(0,r,v),\\
\frac{\dd}{\dd t} V_t(v) &=  \int \psi(|R_t(r) - R_t(\hat r)|)( V_t(\hat v) - V_t(v)) - \nabla \mathcal V(R_t(r) - R_t(\hat r)) \,\dd f(0,\hat r,\hat v),
\end{align}
\end{subequations}
and we can rewrite a solution to the mean-field equation as $f(t) = Z_t{_\#} f(0),$ where $\#$ denotes the push-forward operator given by
\begin{equation*}
    \int_{\R^d \times \R^d} f(z)\, \dd f(t,z) = \int_{\R^d \times \R^d} f(Z_t(z))\, \dd f(0,z).
\end{equation*}

A natural question is if the Dobrushin inequality in combination with the stability result for the ODE dynamics yields a stability result on the mean-field level. This would require the interchangebility of the limits $N\rightarrow \infty$ and $t\rightarrow \infty$ which is beyond the scope of this article. However, we prove the stability on the mean-field limit with the help of LaSalle's theorem in metric spaces \cite[Theorem 3.2.11]{HiPr} in the following. Clearly, the precompactness argument is more involved in this setting. We begin with a the mean-field analogue of Lemma~\ref{lem:invariantnew}. For notational convenience we write in the following $f_t$ for $f(t,\cdot)$ as we already did in Section \ref{sec:background}.

\begin{lemma}\label{lem:invariantMF}
Let Assumption~\ref{ass:particle} hold, $\psi(x) >0$ for $x \in\R$ and 
\[
\mathcal N := \{ h \in \calP_2(\R^d \times \R^d)  \mid \iint (v-\hat v)^\top \psi(|r-\hat r|) (v-\hat v) \dd h(\hat r, \hat v) \dd h(r,v) = 0 \}.
\]
Let $f \colon [0,\infty) \rightarrow  \calP_2(\R^d \times \R^d) $ be a solution of \eqref{eq:meanfieldPHS} with $f_t \in \mathcal N$ for $t\ge 0$. Then the support of $f_t$ in the velocity space is concentrated at  $\bar v:= \int v \dd f_0(r,v)$, i.e.~there exists $g_t \in \calP_2(\R^d)$ such that it holds
\[
f_t(r,v) = g_t(r)\delta(v-\bar v) \quad \text{and} \quad \int \nabla \V(r-\hat r) \dd g_t(\hat r) = 0, \quad \text{for all } r \in \mathrm{supp}(g_t), t \ge 0.  
\]
\end{lemma}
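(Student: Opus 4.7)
The plan is to first exploit the vanishing of the dissipation functional to collapse the velocity marginal of $f_t$ to a single point, and then use the characteristic ODEs \eqref{eq:PHSmf} to read off the equilibrium condition on the position marginal.

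\textbf{Step 1 (velocity marginal collapses to a Dirac).} By hypothesis, for every $t \ge 0$,
\[
\iint \psi(|r-\hat r|)\,|v-\hat v|^2 \, \dd f_t(\hat r,\hat v)\,\dd f_t(r,v) = 0.
\]
Since $\psi(x) > 0$ for all $x \in \R$, the non-negative integrand forces $v = \hat v$ for $(f_t \otimes f_t)$-a.e.\ quadruple. Denoting by $\mu_t$ the $v$-marginal of $f_t$, Fubini gives $\iint |v-\hat v|^2 \,\dd \mu_t(v)\,\dd \mu_t(\hat v) = 0$, so $\mu_t$ is a single Dirac mass $\delta_{c_t}$. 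By Theorem~\ref{thm:dissipativityF} the first velocity moment is conserved, so $c_t = \int v \,\dd f_t(r,v) = \int v \,\dd f_0(r,v) = \bar v$ for all $t$. Therefore $f_t$ is supported on $\R^d \times \{\bar v\}$ and we may write $f_t(r,v) = g_t(r)\delta(v - \bar v)$, with $g_t$ the position marginal of $f_t$.

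\textbf{Step 2 (position force cancels on the support).} I would work with the characteristics \eqref{eq:PHSmf}. Writing $Z_t(r_0,v_0) = (R_t,V_t)$ and $f_t = Z_t{_\#} f_0$, the conclusion of Step 1 means $V_t(r_0,v_0) = \bar v$ for $f_0$-a.e.\ $(r_0, v_0)$, and in particular $\tfrac{\dd}{\dd t} V_t = 0$ along such characteristics. Plugging this back into the velocity equation of \eqref{eq:PHSmf}, the alignment contribution cancels because all relevant $V_t$ agree, leaving
\[
\int \nabla \calV\bigl(R_t(r_0,v_0) - R_t(\hat r_0,\hat v_0)\bigr)\,\dd f_0(\hat r_0,\hat v_0) = 0
\]
for $f_0$-a.e.\ $(r_0, v_0)$. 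Since $g_t$ is the push-forward of the position marginal of $f_0$ under $R_t$, a change of variables gives $\int \nabla \calV(r - \hat r)\,\dd g_t(\hat r) = 0$ for $g_t$-a.e.\ $r$. Continuity of $\nabla \calV$ together with compact support of $f_t$ from Proposition~\ref{prop:WellPDE} upgrades this to every $r \in \mathrm{supp}(g_t)$.

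\textbf{Main obstacle.} The least automatic step is the passage from \emph{the integrand vanishes $(f_t\otimes f_t)$-a.e.} to \emph{the $v$-marginal is a single Dirac}, which requires a clean Fubini argument and identifying the point mass via the conserved momentum; once that is in place, the structure of \eqref{eq:PHSmf} forces the force balance almost automatically. A secondary technical point is legitimising the use of characteristics for a general compactly supported $f_0 \in \calP_c(\R^d\times\R^d)$, but this is standard given the flow theory underlying Proposition~\ref{prop:WellPDE}.
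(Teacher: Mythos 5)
Your proof is correct and follows essentially the same route as the paper: positivity of $\psi$ plus the defining condition of $\mathcal N$ forces the velocity marginal to a Dirac, the conservation of $\int v\,\dd f_t$ from Theorem~\ref{thm:dissipativityF} identifies its location as $\bar v$, and the force balance on $\mathrm{supp}(g_t)$ is read off from the dynamics (you via the characteristics \eqref{eq:PHSmf}, the paper directly from \eqref{eq:meanfieldPHS}, which amounts to the same Lagrangian argument). Your version merely spells out the Fubini/variance step and the a.e.-to-everywhere upgrade that the paper leaves implicit.
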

\begin{proof}
As $\psi(x)>0$ for $x\in \R$, the support of $f_t$ in the velocity variable has to be concentrated at some velocity $\bar v$ for all $t\ge 0$ in order to satisfy the integration condition in the definition of $\mathcal N$. Together with Theorem~\ref{thm:dissipativityF} this implies that $\bar v = \int v \dd f_0(r,v)$. The remaining statement follows directly from equation~\eqref{eq:meanfieldPHS}. 
\end{proof}

\begin{theorem}\label{prop:stabilityMF}
Let Assumption~\ref{ass:particle} hold and there exists $\underline{\psi}$ with $\psi(x) \ge \underline{\psi}>0$ for $x\in\R$. Assume that $\calH$ is bounded from below and that $\nabla \V$ is bounded. Then for every initial condition $f_0 \in \calP_c(\R^d \times \R^d)$ the corresponding solution $f$ of \eqref{eq:meanfieldPHS} satisfies
\[
\lim\limits_{t\rightarrow \infty}\mathrm{dist}(f_t, \mathcal L) =0,
\]
where
\[
\mathcal L = \left\{ f \in \calP_2(\R^d \times \R^d) \mid f=g(x)\delta(v-\hat v), \quad\int \nabla \calV(x-\bar x) \dd g(x) = 0,\quad \hat v = \int v\, \dd f_0(x,v) \right\}.
\]
\end{theorem}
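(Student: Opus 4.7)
The plan is to apply LaSalle's invariance principle in $(\calP_2(\R^d\times\R^d), W_2)$ with the Hamiltonian $\calH$ as Lyapunov function, in direct analogy with Theorem~\ref{prop:stabil}. By Theorem~\ref{thm:dissipativityF} and the lower bound on $\calH$, the map $t\mapsto\calH(f_t)$ is nonincreasing and convergent, and its time derivative vanishes precisely on the set $\mathcal N$ of Lemma~\ref{lem:invariantMF}. Two ingredients remain: relative $W_2$-compactness of the orbit $\{f_t\}_{t\ge 0}$, and the identification of the largest forward-invariant subset of $\mathcal N$ as $\mathcal L$.

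For the precompactness step I would mirror the particle-level argument of Theorem~\ref{prop:stabil}. Differentiating the velocity dispersion $t\mapsto\int|v-\bar v|^2\,\dd f_t$ and symmetrising the alignment drift under the swap $(r,v)\leftrightarrow(\hat r,\hat v)$ turns the friction contribution into $-\iint\psi(|r-\hat r|)|v-\hat v|^2 \dd f_t\dd f_t$. The lower bound $\psi\ge\underline\psi$ combined with the identity $\iint|v-\hat v|^2\dd f_t\dd f_t = 2\int|v-\bar v|^2\dd f_t$ yields a dissipative contribution $-2\underline\psi\int|v-\bar v|^2\dd f_t$, while a Peter-Paul estimate together with $\|\nabla\calV\|_\infty<\infty$ absorbs the potential cross-term into $\varepsilon\int|v-\bar v|^2\dd f_t + \varepsilon^{-1}\|\nabla\calV\|_\infty^2$. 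Taking $0<\varepsilon<2\underline\psi$ and applying Gronwall bounds the velocity dispersion uniformly in time, with exponential decay up to an affine factor. Feeding this back through the characteristic flow \eqref{eq:PHSmf}, the relation $\frac{\dd}{\dd t}R_t(r)=V_t(v)-\bar v$ together with the compact-support preservation of Proposition~\ref{prop:WellPDE} then confines $\mathrm{supp}(f_t)$ to a fixed compact subset of $\R^{2d}$ for all $t\ge 0$, giving $W_2$-precompactness of the orbit.

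Once precompactness is secured, LaSalle forces the $\omega$-limit set $\omega(f_0)$ to be nonempty, forward-invariant and contained in $\mathcal N$. Lemma~\ref{lem:invariantMF} identifies every element of such an invariant trajectory as a measure of the form $g(r)\delta(v-\bar v)$ with $\int\nabla\calV(r-\hat r)\dd g(\hat r)=0$ on $\mathrm{supp}(g)$; the mean-velocity conservation of Theorem~\ref{thm:dissipativityF} fixes $\bar v=\int v\,\dd f_0$, so $\omega(f_0)\subseteq \mathcal L$ and the conclusion $\lim_{t\to\infty}\dist(f_t,\mathcal L)=0$ follows.

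The main obstacle is the precompactness step: narrow tightness alone is insufficient for $W_2$-compactness, which additionally requires uniform integrability of $|z|^2$ against $\{f_t\}_{t\ge 0}$. The characteristic-flow argument bypasses this by delivering a uniformly compact support rather than mere moment bounds. A secondary technicality is verifying the hypotheses of the metric-space version of LaSalle \cite[Theorem 3.2.11]{HiPr}, in particular the continuity of $\calH$ with respect to $W_2$ on the compact hull of the orbit, which is routine from the regularity of $\psi$ and $\calV$ but should be recorded explicitly.
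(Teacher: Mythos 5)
Your proposal follows essentially the same route as the paper's proof: LaSalle's invariance principle in $(\calP_2,W_2)$ with $\calH$ as Lyapunov function, precompactness via a Peter--Paul plus Gronwall estimate showing exponential decay of the velocity dispersion $\int|v-\bar v|^2\,\dd f_t$, a subsequent bound on the spatial spread, and identification of the invariant set through Lemma~\ref{lem:invariantMF} and the conserved mean velocity. The only minor deviation is that you confine the spatial support via the characteristic flow \eqref{eq:PHSmf}, whereas the paper bounds the second moment $\int|r|^2\,\dd f_t$ directly by Cauchy--Schwarz and Gronwall; both serve the same purpose in the argument.
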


\begin{proof}
 The proof follows the lines of the finite-dimensional result. The aim is again to employ LaSalle's invariance theorem. Hence we have to show that the probability measure corresponding to the solution stays compactly supported for all times $t\ge 0$, since then the precompactness w.r.t.~$\mathcal W_2$ follows by \cite[Proposition 2.2.3]{panaretos2020invitation}. 

We first consider the 2-Wasserstein distance of $f_t$ and $\delta_{\bar v}$, where $\bar v = \int v \dd f_t(r,v)$, given by
\begin{align*}
\mathcal W_2&(f_t, \delta_{\bar v})^2\\ &= \int |v - \bar v|^2 \dd f_t = \int |v-\bar v|^2 \dd f_0 + \int_0^t \int  |v- \bar v|^2 \partial_s f_s \dd z \dd s \\
&=  \int |v-\bar v|^2 \dd f_0 - \int_0^t \int  |v- \bar v|^2 \cdot \nabla_{(r,v)} \cdot \left( f \ast \left(\begin{pmatrix} 0 & I \\ -I & 0 \end{pmatrix} -\begin{pmatrix} 0 & 0 \\ 0 & \psi(r) \end{pmatrix}\right) \begin{pmatrix} \nabla \V(r) \\ v \end{pmatrix} \right) \dd f_s \dd s \\
&= \int |v-\bar v|^2 \dd f_0 - 2 \int_0^t \int (f\ast v) (f\ast (\psi(r)v + f \ast \nabla \V(r) )) \dd f_s \dd s. 
\end{align*}
A simple computation shows that $(f\ast v) (f\ast \psi(r) v) = \frac12 f\ast (v\psi(r)v)$ which will be helpful in the following.

This and Peter and Paul inequality applied to the term with the potential interactions allows us to further estimate
\begin{align*}
\int |v - \bar v|^2 \dd f_t  &\le   \int |v-\bar v|^2 \dd f_0 - \int_0^t  \int \int (\underline{\psi} - \varepsilon) |v - \hat v|^2 \dd \hat f_s \dd f_s \dd s + \frac{\| \nabla \V \|_\infty^2}{\varepsilon} \\
&\le \int |v-\bar v|^2 \dd f_0 - \int_0^t   \int (\underline{\psi} - \varepsilon) |v - \bar v|^2 \dd f_s  \dd s + \frac{\| \nabla \V \|_\infty^2}{\varepsilon}.
\end{align*}
An application of Gronwall's inequality yields
\[
\mathcal W_2(f_t, \delta_{\bar v})^2 = \int |v - \bar v|^2 \dd f_t \le \left(\int |v-\bar v|^2 \dd f_0+\frac{\| \nabla \V \|_\infty^2}{\varepsilon} \right) e^{-(\underline{\psi} - \varepsilon)t}.
\]
This shows that the support of $f_t$ w.r.t.~$v$ strictly decays over time.

We use this result to show the boundedness of the support w.r.t.~$r$. Indeed, we obtain
\begin{align*}
\int |r|^2 \dd f_t &= \int |r|^2 \dd f_0 - 2 \int_0^t \int r  \cdot (v- \bar v) \dd f_s \dd s \\
&\le 1+\int |r|^2 \dd f_0 + \int_0^t \left(\int |r|^2 \dd f_s\right)^{1/2} \; \left(\int |v- \bar v|^2 \dd f_s\right)^{1/2} \dd s,  
\end{align*}
and thus 
\begin{align*}
\left(\int |r|^2 \dd f_t\right)^{1/2} 
&\le 1+ \int |r|^2 \dd f_0 + \int_0^t \left(\int |r|^2 \dd f_s\right)^{1/2} \; \left(\int |v- \bar v|^2 \dd f_s\right)^{1/2} \dd s,
\end{align*}
where we used $x > \sqrt{x}$ for $x>1$.

We define $\alpha := (1+\int |r|^2 \dd f_0)$ and $\beta:=\left(\int |v-\bar v|^2 \dd f_0+\frac{\| \nabla \V \|_\infty^2}{\varepsilon} \right)$
Then we obtain with Gronwall inequality
\[
\left(\int |r|^2 \dd f_t\right)^{1/2} \le \alpha \exp( \int_0^t \beta e^{-(\underline{\psi} - \varepsilon)s }\dd s ).
\]
Since the integral converges, also the support of $f_t$ w.r.t.~$r$ is compactly supported for all $t\ge 0$. 

As the support of $f_t$ is compactly supported for all times, we are allowed to apply LaSalle's invariance principle to obtain the result.
\end{proof}

\section{PHS preserving coupling of subsystems}\label{sec:coupling}
To discuss strategies for the coupling of subsystems, we begin with the identification of the ports of the generalized mass, spring and damper components which model the interacting particle system.

\subsection{Identification of ports}
The interaction dynamics can be interpreted as a generalized mass-spring-damper system. In order to identify this we decouple the system into its smallest parts. This allows us then to discuss PHS preserving coupling of different subsystems.

The $i$-th mass is described by its position $x_i$ and its velocity $v_i$. Its evolution is driven by its kinetic energy $H^m(x_i,v_i) = v_i^\top v_i$ and deviations from this velocity are due to external forces $f_i$ which are called flows in the PHS framework. Together with the effort $e_i = \frac{\partial}{\partial v_i} H^m(x_i,v_i) = v_i$ this is leading to the dynamics
\begin{align*}
\frac{\dd}{\dd t} x_i = v_i, \quad \frac{\dd}{\dd t} v_i =  f_i, \quad \Rightarrow \quad \frac{\dd}{\dd t} z_i = J \nabla H^m(z_i) + B_i u_i, \quad y_i = B_i^\top \nabla H^m(z) =  v_i,
\end{align*}
where $z_i=(x_i,v_i)$, $J = \left(\begin{smallmatrix}0 & I \\ -I & 0\end{smallmatrix}\right)$,  $B_i = \left(\begin{smallmatrix}0 \\ I\end{smallmatrix}\right)$  and $u_i =  f_i$ is the input and $y_i=v_i=e_i$ the output.

The spring and damper connecting mass $i$ and mass $j$ are described by the relative position $q_{ij}:=x_i-x_j$ and the relative velocity $v_{ij} := v_i - v_j$. Note that the damper is a purely dissipative element, hence it admits no Hamiltonian but the force $f_{ij}^\mathrm{damper} = -\psi(|q_{ij}|)v_{ij}$. The Hamiltonian of the spring is given by $H^{sd}(q_{ij},v_{ij}) = \calV(q_{ij})$. Altogether this leads to the dynamics
\begin{align*}
&\frac{\dd}{\dd t} q_{ij} = v_{ij}, \qquad \frac{\dd}{\dd t} v_{ij} = -\frac{\partial}{\partial q_{ij}} H^{sd}(q_{ij},v_{ij}) -\psi(|q_{ij}|)v_{ij} \\ 
&\frac{\dd}{\dd t}z_{ij} = (J - R(z_{ij})) \nabla H^{sd}(z_{ij}) + B_{ij}u_{ij},
\end{align*}
where $z_{ij}=(q_{ij},v_{ij})$, $J = \left(\begin{smallmatrix} 0&I\\-I  & 0\end{smallmatrix}\right)$, $R(z_{ij}) = \left(\begin{smallmatrix}0 & 0 \\ 0 & \psi(|q_{ij}|)\end{smallmatrix}\right), B_{ij} = \left(\begin{smallmatrix} I &  -I\\ 0&0\end{smallmatrix}\right)$ and $u_{ij} = \left(\begin{smallmatrix}v_i \\ v_j\end{smallmatrix}\right)$. The corresponding output is given by
\[
y_{ij} = B_{ij}^\top \nabla H^{sd}(z_{ij}) = \begin{pmatrix} I & 0 \\ -I&0\end{pmatrix} \nabla H^{sd}(z_{ij}) = \begin{pmatrix} I & 0 \\ -I&0\end{pmatrix} \begin{pmatrix} \nabla \calV(q_{ij}) \\0\end{pmatrix} = \begin{pmatrix} \;\; \nabla \calV(q_{ij}) \\ -\nabla \calV(q_{ij})\end{pmatrix}.
\]
The flow  and effort variable of the spring-damper system connecting mass $i$ and mass $j$ are given by  
 \[f_{ij} = v_i - v_j \quad \textrm{ and } \quad  e_{ij} = \frac{\partial}{\partial q_{ij}} H^{sd}(q_{ij},v_{ij})+\psi(|q_{ij}|)v_{ij}=\nabla \calV(q_{ij})+\psi(|q_{ij}|)v_{ij}.\]

To couple the $i$-th mass with spring-damper system connecting mass $i$ and mass $j$ we define
$f_i=-e_{ij}$ and $e_i=f_{ij}$ leading to 
\begin{align*}
\frac{\dd}{\dd t} x_i = v_i, \quad \frac{\dd}{\dd t} v_i = -\nabla \calV(q_{ij})-\psi(|q_{ij}|)v_{ij}.
\end{align*}
Taking all binary interactions into account and summation over all binary interactions yields \eqref{eq:xvparticles}, which leads to the PHS formulation in closed  form as given in \eqref{eq:PHSparticlenew}.
Alternatively, instead of considering the actual positions $x_i$ of the particles, we can use the relative positions $q_{ij}$ which then yields the PHS formulation studied in 
\cite{Matei}.

\subsection{Coupling of identical subsystems}
On the mean-field level we can easily couple two interacting particle system of same type by adding and rescaling their probability measures. Indeed, let $f^1, f^2$ be two interacting particle systems with identical interaction behaviour. Then 
\begin{align*}
    \partial_t f^1 = - \nabla_{(x,v)} \cdot \Big( \big(f^1\ast (A \nabla_f \calH(f^1)) \big) f^1 \Big), \qquad \partial_t f^2 = - \nabla_{(x,v)} \cdot \Big( \big(f^2\ast (A \nabla_f \calH(f^2)) \big) f^2 \Big)
\end{align*}
Let us now define $f^\mathrm{sum} = (f^1 + f^2)/(\int df^1 + \int df^2)$, where we rescale to obtain a probability measure. Note that if $f^1$ and $f^2$ are probability measures, it holds
\[
f^\mathrm{sum} = \frac12(f^1 + f^2).
\]
The Hamiltonian of the coupled system is given by
\[
H(f_t^\mathrm{sum}) = \int_{\mathbb R^d\times \mathbb R^d}\int_{\R^d \times \R^d} \left(  \frac{1}{2}  v^T v + \V(x-\bar x)\right) \dd f^\mathrm{sum}(t,\bar x,\bar v) \dd f^\mathrm{sum}(t,x,v)
\]
leading to the dynamics
\begin{align*}
    \partial_t f^\mathrm{sum} = - \nabla_{(x,v)} \cdot \Big( \big(f^\mathrm{sum} \ast (A \nabla_f \calH(f^\mathrm{sum})) \big) f^\mathrm{sum} \Big).
\end{align*}

If we sample now $N$ particles from $f^1(0,x,v)$ and $N$ particles from $f^2(0,x,v)$ we obtain the systems
\begin{align*}
    \frac{d}{dt} x_i^1 &= v_i^1, \\
    \frac{d}{dt} v_i^1 &= -\frac{1}{N} \sum_{j=1}^{N} \psi(| x_i^1 - x_j^1  |)(v_i^1 - v_j^1) - \frac{1}{N} \sum_{j=1}^{N} \nabla \calV(x_i^1 - x_j^1)
\end{align*}
and
\begin{align*}
    \frac{d}{dt} x_i^2 &= v_i^2, \\
    \frac{d}{dt} v_i^2 &= -\frac{1}{N} \sum_{j=1}^{N} \psi(| x_i^2 - x_j^2  |)(v_i^2 - v_j^2) - \frac{1}{N} \sum_{j=1}^{N} \nabla \calV(x_i^2 - x_j^2).
\end{align*}
Sampling $2N$ particles from $f^\mathrm{sum}$ yields
\begin{align*}
    \frac{d}{dt} x_i &= v_i, \\
    \frac{d}{dt} v_i &= -\frac{1}{2N} \sum_{j=1}^{2N} \psi(| x_i - x_j  |)(v_i - v_j) - \frac{1}{2N} \sum_{j=1}^{2N} \nabla \calV(x_i - x_j)
\end{align*}
a fully coupled system. Here, we stack the vectors $x=(x^1,x^2)$ and $v=(v^1,v^2)$. The Hamiltonian as well as the system matrices $J$ and $R$ admit the same structure in the dimension $2N$. Note that the generalization to $K\in\N$ coupled interacting particle systems of same type is straightforward.

\begin{remark}\label{rem:Hpreserving}
We want to stress the fact that in general the value of the Hamiltonian of the coupled systems is greater than the value of the sum of the Hamiltonians of the subsystems. This is due to the fact that additional generalized springs are needed to to define the interaction behaviour of the individuals of the different swarms. However, in case of two identical swarms as above the rescaling of the Hamiltonian yields that the values of the Hamiltonian coincide. In the following we describe one approach that allows the coupling different subsystems in a Hamiltonian preserving way. In fact, the interacting across subsystems influences only the alignment.
\end{remark}

\subsection{Coupling of different species}
Let us consider interacting particle of two different species, which are modelled with the help of different interaction potentials $\calV_k$ and $\psi_k$ for $k=1,2.$ For simplicity we assume that both subsystem consist of $N\in\N$ particles. The dynamics of the subsystems read
\begin{align*}
    \frac{d}{dt} x_i^k &= v_i^k, \\
    \frac{d}{dt} v_i^k &= -\frac{1}{N} \sum_{j=1}^{N} \psi_k(| x_i^k - x_j^k  |)(v_i^k - v_j^k) - \frac{1}{N} \sum_{j=1}^{N} \nabla \calV_k(x_i^k - x_j^k)
\end{align*}
with Hamiltonian
$$ \calH^N_k(z^k(t)) = \sum_{i=1}^{N} (v_i^k(t))^\top v_i^k(t) + \frac{1}{N}\sum_{i,j=1}^{N} \calV_k(x_i^k(t) - x_j^k(t)).  $$
In order to interconnect the systems in a power conserving way, we have to define how particles of different species interact with each other. 
By $\psi_c:\mathbb R\rightarrow \mathbb R_{\ge 0}$ we denote this interaction term. Then we obtain 
\begin{align*}
    \frac{d}{dt} x_i^k &= v_i^k, \\
    \frac{d}{dt} v_i^k &= -\frac{1}{N} \sum_{j=1}^{N} \psi_k(| x_i^k - x_j^k |)(v_i^k - v_j^k) - \frac{1}{N} \sum_{j=1}^{N} \psi_{c}(| x_i^k - x_j^\ell |)(v_i^k - v_j^\ell) \\
    &\qquad - \frac{1}{N} \sum_{j=1}^{N} \nabla \calV_k(x_i^k - x_j^k),
\end{align*}
where $i=1,\ldots, N$, $k,\ell\in\{1,2\}$ and $k+\ell=3$.
The Hamiltonian is then given by
$$\calH^N(z(t)) = \calH_1^N(z^1(t)) + \calH_2^N(z^2(t)). 
$$
Let us define $z=(x^1, v^1, x^2, v^2).$ Then
\begin{equation}\label{eq:PHSparticle2}
\frac{\dd}{\dd t} z = \left[ \begin{pmatrix} 0 & I & 0 & 0  \\ -I & 0 & 0 & 0 \\ 0 & 0 & 0 & I \\ 0 & 0 & -I & 0  \end{pmatrix} - \begin{pmatrix} 0 & 0 & 0 & 0 \\ 0 & \Psi_1(z) & 0 & \Psi_{c}(z) \\  0 & 0 & 0 & 0 \\ 0 & \Psi_{c}^\top(z) & 0 & \Psi_{2}(z)\end{pmatrix} \right] \frac{\partial \calH^N(z)}{\partial z}. 
\end{equation}
with 
 \begin{align*}  \Psi_k(z) =& \frac{1}{N}\!\begin{pmatrix} \displaystyle\sum_{j=2}^N \!\psi_k(|x^k_j-x^k_1|) &-\psi_k(|x^k_2-x^k_1|) & \ldots & -\psi_k(|x^k_N-x^k_1|)\\
 -\psi_k(|x^k_1-x^k_2|) & \displaystyle\sum_{j=1,j\ne 2}^N \!\psi_k(|x^k_j-x^k_2|)& \ddots & \vdots\\
 \vdots &\ddots &\ddots&-\psi_k(|x^k_N\!-\!x^k_{N-1}|)\\
 -\psi_k(|x^k_1-x^k_N|)&\ldots&-\psi_k(|x^k_{N-1}\!-\!x^k_N|)&\displaystyle\sum_{j=1}^{N-1} \!\psi_k(|x^k_j-x^k_N|)\end{pmatrix}\\
 &+\frac{1}{N}\!\begin{pmatrix} \displaystyle\sumj \!\psi_c(|x^\ell_j-x^k_1|) &0 & \ldots & 0\\
 0 & \displaystyle\sumj \!\psi_c(|x^\ell_j-x^k_2|)& \ddots & \vdots\\
 \vdots &\ddots &\ddots&0\\
 0&\ldots&0&\displaystyle\sumj \!\psi_c(|x^\ell_j-x^k_N|)\end{pmatrix},
 \end{align*}
and 
\[  \Psi_c(z) =-\frac{1}{N}\!\begin{pmatrix} \psi_{c}(|x_1^2-x_1^1|) &\psi_{c}(|x_2^2-x_1^1|) & \ldots & \psi_{c}(|x_{N}^2-x_1^1|)\\
\psi_{c}(|x_1^2-x_2^1|) & \psi_{c}(|x_2^2-x_2^1|) & \ddots & \vdots\\
\vdots &\ddots &\ddots&\psi_{c}(|x_{N}^2\!-\!x_{N-1}^1|)\\
\psi_{c}(|x_1^2-x_{N}^1|)&\ldots&\psi_{c}(|x_{N-1}^2\!-\!x_{N}^1|)&\psi_{c}(|x_N^2-x_N^1|) \end{pmatrix}.\]
Now, there are different cases: passing both species to the mean-field limit, passing just one to the limit and the other remains finite.
For the mixed case we obtain
\begin{align*}
\partial_t f &+ \nabla_x \cdot \left( v f \right) \\
&= \nabla_v \cdot \left(\left( \int \psi_1(|x - \bar x|)( v-\bar v ) + \nabla \mathcal V_1(x - \bar x) \dd f(t,\bar x,\bar v) \right. \right. \\
& \qquad \qquad\quad + \left.\left. \frac{1}{N}\sum_{j=1}^N \psi_{c}(| x - x_j |)(v - v_j)  \right) f \right), \\
\frac{\dd}{\dd t} x_i &= v_i, \\
\frac{d}{dt} v_i &=  -\frac{1}{N} \sum_{j=1}^{N} \psi_2(| x_i - x_j |)(v_i - v_j) - \int  \psi_{c}(| x_i - \bar x |)(v_i - \bar v) \dd f(\bar x, \bar v)\\
    &\qquad - \frac{1}{N} \sum_{j=1}^{N} \nabla \calV_2(x_i^k - x_j^k).
\end{align*}
The Hamiltonian is a combination as well
$\calH(f_t,z(t)) = \calH(f_t) + \calH^N(z(t))$ as the coupling across subsystems affects only the alignment terms, see Remark~\ref{rem:Hpreserving} for more details.

\section{Conclusion and outlook}
We derived a minimal port-Hamiltonian formulation of interacting particle systems and  showed that the port-Hamiltonian structure is preserved in the mean-field limit. The Hamiltonian is used as Lyapunov function to characterize the long-time behavior of the systems on the particle and the mean-field level. Hence the PHS formulation opens a new perspective on the well-studied particle and mean-field description of interacting particle system. Moreover, the identification of Casimir functions prepares the ground to define port-Hamiltonian  preserving control strategies in future work. On the other hand, the LaSalle-type argument for the long-term behavior may open the door for convergence results for general consensus dynamics for optimization and sampling tasks in the spirit of \cite{carrillo2022consensus,totzeck2022trends}.

\section*{Acknowledgments} We thank the anonymous referee for their critical and constructive comments that helped us to strengthen the results on the long-time behavior in both, the ODE and the PDE setting.
        
        \putbib[interparticle]
    \end{bibunit}

\end{document}